\newcommand{\e}{\mathrm{e}}
\newcommand{\He}{H^{\gamma, \varepsilon, t}}
\newcommand{\haggf}{H_{\mathbb{\lbb}}^{G, E, t}} 
\newcommand{\haggfe}{H^{G, E, t}} 
\def\eps{\varepsilon}
\renewcommand{\epsilon}{\varepsilon}
\newcommand{\bx}{\mathbf{x}}
\newcommand{\by}{\mathbf{y}}
\newcommand{\bv}{\mathbf{v}}
\newcommand{\bxi}{\ensuremath{\boldsymbol\xi}}
\newcommand{\bDelta}{\boldsymbol{\Delta}}
\newcommand{\bq}{\mathbf{q}}
\newcommand{\lbb}{\ensuremath{\boldsymbol\ell}} 
\newcommand{\dk}{\Delta_k}
\newcommand{\jmax}{{j_{\mathrm{max}}}}
\crefname{figure}{Figure}{figures}
\newtheorem{criterion}{Criterion}
\newtheorem{remark}{Remark}
\newtheorem{theorem}{Theorem}
\newtheorem{lemma}{Lemma}
\newtheorem{proposition}{Proposition}
\numberwithin{theorem}{section}
\numberwithin{lemma}{section}
\numberwithin{remark}{section}
\numberwithin{proposition}{section}
\numberwithin{equation}{section}
\title[Stable interpolation with anisotropic Gaussians]{Stable Interpolation with isotropic and anisotropic Gaussians using Hermite generating function}
\author[K. Kormann, C. Lasser, A. Yurova]{Katharina Kormann$^{\dagger\ddagger}$ \and Caroline Lasser$^{\ddagger}$ \and Anna Yurova$^{\dagger\ddagger}$}
\date{\today \\
$\phantom{111}$\textbf{Funding:} This work has been carried out within the framework of the EUROfusion Consortium and has received funding from the Euratom research and training programme 2014-2018 and 2019-2020 under grant agreement No 633053. The views and opinions expressed herein do not necessarily reflect those of the European Commission.\\
$\phantom{111}^{\dagger}$Max Planck Institute for Plasma Physics, Boltzmannstr.~2, 85748 Garching, Germany
  (\href{mailto:katharina.kormann@ipp.mpg.de}{katharina.kor\-mann@ipp.mpg.de}, \href{mailto:anna.yurova@ipp.mpg.de}{anna.yurova@ipp.mpg.de}).\\
$\phantom{111}^{\ddagger}$Technical University of Munich, Department of Mathematics, Boltzmannstr.~3, 85748 Garching, Germany (\href{mailto:classer@ma.tum.de}{classer@ma.tum.de}).} 
\begin{document}
\maketitle

\begin{abstract}
Gaussian kernels can be an efficient and accurate tool
for multivariate interpolation. In practice, high accuracies are often achieved in the flat limit where the interpolation matrix becomes increasingly ill-conditioned. Stable evaluation algorithms for isotropic Gaussians (Gaussian radial basis functions) have been proposed based on a Chebyshev expansion of the Gaussians
by Fornberg, Larsson \& Flyer and based on a Mercer expansion with Hermite polynomials by Fasshauer \& McCourt. In this paper, we propose a new stabilization algorithm for the multivariate interpolation with isotropic or anisotropic Gaussians  derived from the generating function of the Hermite polynomials. We also derive and analyse a new analytic cut-off criterion for the generating function expansion that allows to automatically adjust the number of the stabilizing basis functions. 
\end{abstract}

\section{Introduction}
Multivariate interpolation is a topic that is relevant for 
a vast number of applications.
Gaussian radial basis functions (Gaussian RBFs) are a class of functions for which interpolation generalizes to higher dimensions in a simple way while yielding spectral accuracy \cite{fasshauer2012}. However, it is known that rather small values of the \emph{shape parameter} $\eps>0$ (the width of the Gaussians) are often required. 
In this case, the
Gaussians become increasingly flat, and the interpolation matrix becomes ill-conditioned. 
This problem has been extensively studied in the literature (see the review \cite{fornberg_flyer_2015} by Fornberg \& Flyer and \cite{tarwater1985} for Tarwater's description of
this phenomenon in 1985). It has been quantified by Fornberg \& Zuev \cite{fornberg2007}, 
that the eigenvalues of the interpolation matrix are proportional to 
powers of the shape parameter, causing the notorious ill-conditioning in the flat limit regime $\eps\to0$. 

A direct collocation solution of the interpolation problem, referred to as RBF-Direct in the literature, computes the expansion coefficients of the Gaussian interpolant by inverting the collocation matrix and then evaluating the expansion. 
Several algorithms have been proposed to stabilize 
this procedure 
 in the flat limit regime, see \cite{fornberg2004, fornberg2007stable,fornberg2011stable,fasshauer2012stable,larsson2013,fasshauer2015kernel,fornberg2016}. 
A common idea of many of the stabilization algorithms---including the one proposed in this paper---is to 
evaluate the interpolant in a sequence of well-conditioned steps by a transformation to a different basis so that 
the ill-conditioning is isolated in a diagonal matrix that can be inverted analytically. 

\subsection{The new HermiteGF stabilization approach}
In this paper, we propose a stabilizing expansion of isotropic Gaussian functions, later referred to as HermiteGF expansion, built on the exponential generating function of the classic Hermite polynomials. For certain classes of functions, anisotropic Gaussians yield improved accuracy as shown in \cite{beatson2010}.
To include these cases in our description, we use an anisotropic generating function recently obtained by Dietert, Keller, \&  
Troppmann~\cite{dietert2017invariant} as well as by Hagedorn \cite{hagedorn2015generating} and generalize our HermiteGF expansion to the anisotropic case. 
We also propose and analyze a novel cut-off criterion, that contrary to the existing ones does not only account for the diagonal contributions of the stabilization but measures the impact of the full stabilization basis.

\subsection{Previous stabilization approaches}
The first stabilization method was the Contour--Pad\'e approximation proposed by Fornberg \& Wright for multi\-quadrics \cite{fornberg2004}. Later Fornberg \& Piret \cite{fornberg2007stable} proposed the so-called RBF-QR method for stable interpolation with Gaussians on the sphere by expanding the Gaussians in spherical harmonics. 
The method has been extended to more general domains in one to three dimensions by Fornberg, Larsson, \& Flyer \cite{fornberg2011stable}. This expansion is based on a combination of Chebyshev polynomials and spherical harmonics. This method will be referred to as Chebyshev-QR in this paper. The technique has also been used for the stable computation of RBF-generated finite differences by Larsson, Lehto, Heryudono, \& Fornberg~\cite{larsson2013}. Fornberg, Lehto, \& Powell~\cite{fornberg2016} developed an alternative stabilization technique for the same problem. To treat complex domains, the Chebyshev-QR method has been combined with a partition-of-unity approach by Larsson, Shcherbakov, \& Heryudono \cite{larsson2017least}. 
Fasshauer \& McCourt \cite{fasshauer2012stable} have developed another RBF-QR method, called Gauss-QR, that relies on a Mercer expansion of the Gaussian kernel. The basis transformation involves 
scaled Hermite polynomials. Compared to the Chebyshev-QR method by Fornberg et al.~\cite{fornberg2011stable}, the Gauss-QR algorithm extends to higher dimensions in a simpler way. On the other hand, the method introduces an additional parameter that needs to be hand-tuned. 
Our new basis is similar to the one in  \cite{fasshauer2012stable} with the difference that it can be extended to the interpolation with anisotropic Gaussians. Moreover, the generating function framework 
enables us to derive a new cut-off criterion that accounts for the 
full Hermite basis effect.

\subsection{Organization of the paper}
The paper is organized as follows: In \cref{sec:hermiteGFtheory}, we introduce our HermiteGF expansion of isotropic and anisotropic Gaussians and derive important properties of the HermiteGF basis. In \cref{sec:stabilization}, we propose a new RBF-QR method based on the HermiteGF basis. 
A cut-off criterion for the new HermiteGF expansion is derived in \cref{sec:criterion}. Numerical results show the accuracy of our method in \cref{sec:numericalResults} and finally, \cref{sec:conclusions} concludes the paper.

\section{HermiteGF expansion}
\label{sec:hermiteGFtheory}

In this section, similarly to the earlier stabilization approaches \cite{fornberg2007stable,fornberg2011stable,fasshauer2012stable}, we propose an expansion of the anisotropic Gaussians in a ``better'' basis, that spans the same space, but avoids instabilities related to the flat limit. For the sake of simplicity, we first derive the expansion for Gaussians in 1D. We then extend our expansion to the case of multivariate anisotropic Gaussians.

\subsection{Interpolation problem}
Before introducing our expansion of the Gaussian basis, let us briefly define the interpolation problem.
\label{sec:interpolationProblem}
Given a set $\{\phi_k(\bx)\}_{k=1}^N$ of 
expansion functions and the values $\boldsymbol{f} = \{f_i\}_{i=1}^{N}$ of the function 
$f:\mathbb{R}^d\to\mathbb{R}$
at collocation points $\{\bx_i^{\mathrm{col}}\}_{i=1}^N$ we seek to find an interpolant of the form,
\begin{equation}
 s(\bx) = \sum_{k = 1}^N \alpha_k \phi_k(\bx), 
 \label{eq:RBFinterpolation}
\end{equation}
such that it satisfies the $N$ collocation conditions,
\begin{equation*}
 s(\bx_i^{\mathrm{col}}) = f_i \quad \text{for} \quad i=1,\ldots, N.
\end{equation*}
The direct approach is to find the coefficients $\boldsymbol{\alpha}=\{\alpha_k\}_{k=1}^{N}$ as a solution of the linear system,
\begin{equation}
 \Phi^{\mathrm{col}} \boldsymbol{\alpha} = \boldsymbol{f}, \quad \text{with}\quad \Phi_{ij}^{\mathrm{col}} = \phi_j(\bx_i^{\mathrm{col}}).
 \label{eq:interpolationProblem}
\end{equation}
The matrix $\Phi^{\mathrm{col}}\in\mathbb{R}^{N\times N}$ is called \emph{collocation matrix}. Then, after solving the linear system \eqref{eq:interpolationProblem}, the interpolant \eqref{eq:RBFinterpolation} can be evaluated at any point of the domain.
In this paper, we consider Gaussian radial basis functions (isotropic Gaussians)
\begin{equation*}
\phi_k(\bx) = \exp(-\epsilon^2 \Vert \bx - \bx_k \Vert ^2)
\end{equation*}
with center points $X^{\mathrm{cen}} = \{\bx_k\}_{k=1}^N$, shape parameter $\epsilon > 0$ and anisotropic Gaussians
\[
 \phi_k(\bx) = \exp(-(\bx - \bx_k)^T E^T E (\bx - \bx_k))
\]
with invertible shape matrix $E\in\mathbb{R}^{d\times d}$.

\subsection{HermiteGF expansion in 1D}
Let $\{h_\ell\}_{\ell \geq 0}$ be the Hermite polynomials in the physicists' version, that satisfy the
recurrence relation,
\begin{equation*}
 h_{\ell+1}(x) = 2xh_\ell(x) -2\ell h_{\ell-1}(x), \quad h_{0}(x) = 1, \quad h_{-1}(x) = 0.
 \end{equation*}
The following upper bound holds for the magnitude of the Hermite polynomials \cite[Expr. 22.14.17]{abramowitz1964handbook}: There exists a positive constant $c \approx 1.086435$  
such that
\begin{equation*}
 \vert h_\ell(x) \vert \leq e^{\frac{x^2}{2}}\,c\,\sqrt{2^{\ell}\ell!}\,.
\end{equation*}
for all $\ell\ge 0$ and all $x\in\mathbb{R}$. The combinatorial factors $\sqrt{2^{\ell}\ell!}$ grow very fast with $\ell$. Therefore, in order to avoid overflow for large $\ell$, we work with a scaled version of the Hermite polynomials, $\frac{1}{\sqrt{2^{\ell} \ell!}}h_{\ell}$.
We introduce three parameters, 
\[
\varepsilon>0, \ \gamma>0, \ t \in (0,1),
\]
that is, the usual shape parameter $\eps>0$, a scaling parameter $\gamma>0$ for varying 
the evaluation domain of the Hermite polynomials to improve conditioning, and a truncation 
parameter $t\in(0,1)$ for controlling the truncation error of the stabilization expansion. We then
define the following basis functions,
\begin{equation*}
\He_\ell(x) = \frac{t^{\vert \ell \vert/2}}{\sqrt{2^\ell \ell!}}h_\ell(\gamma x)\e^{-\varepsilon^2 x^2}, 
\end{equation*}
that we refer to as \emph{HermiteGF functions}. 

Based on the generating function theory, we derive an infinite expansion of the one dimensional Gaussian RBFs in the new HermiteGF basis $\{\He_\ell\}_{\ell\ge0}$.

\begin{proposition}[HermiteGF expansion (1D)]
For all parameters $\varepsilon > 0$, $\gamma > 0$, $t\in(0,1)$, 
and for all shifts $x_0 \in \mathbb{R}$, we have a pointwise expansion
\begin{equation}
\phi_k(x) = \e^{-\varepsilon^2\left(x-x_k\right)^2} = \exp\left(\varepsilon^2 \dk ^2\left(\frac{\varepsilon^2}{\gamma^2} - 1\right)\right) \sum_{\ell \geq 0} \frac{\varepsilon^{2\ell}\sqrt{2^\ell}}{\gamma^\ell
\sqrt{t^\ell\ell!}}\dk^\ell \He_\ell(x-x_0),
\label{eq:RBFhermiteExpansion}
\end{equation}
for all $k=1,\ldots,N$, where $\dk = x_k - x_0$. 
The RBF interpolant $s(x)$ can then be pointwise computed as
\begin{equation}
 s(x) = \sum_{k = 1}^N \alpha_k\exp\left(\varepsilon^2 \dk^2\left(\frac{\varepsilon^2}{\gamma^2} - 1\right)\right)  \sum_{\ell \geq 0}  \frac{\varepsilon^{2\ell} \sqrt{2^\ell}}{\gamma^\ell \sqrt{t^\ell\ell!}} \dk^\ell \He_\ell(x - x_0).
 \label{eq:interpolantExpansion}
\end{equation}
\label{th:HermiteGF_expansion}
\end{proposition}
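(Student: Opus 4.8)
The plan is to reduce the claimed identity to the classical exponential generating function of the physicists' Hermite polynomials,
\[
\sum_{\ell\ge 0}\frac{h_\ell(y)}{\ell!}\,s^\ell = \e^{2ys-s^2},
\]
which holds for all $y,s\in\mathbb{R}$. Its absolute (and locally uniform) convergence is guaranteed by the Hermite bound quoted above, since the $\ell$-th term is then majorized by $c\,\e^{y^2/2}(|s|\sqrt{2})^\ell/\sqrt{\ell!}$, a convergent series. This legitimizes all rearrangements below.

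First I would pass to the translated variable $u=x-x_0$ and write $\dk=x_k-x_0$, so that $x-x_k=u-\dk$, and expand the quadratic exponent,
\[
\phi_k(x)=\e^{-\varepsilon^2(u-\dk)^2}=\e^{-\varepsilon^2\dk^2}\,\e^{-\varepsilon^2 u^2}\,\e^{2\varepsilon^2\dk u}.
\]
The last factor is where the generating function enters: choosing $y=\gamma u$ and $s=\varepsilon^2\dk/\gamma$ gives $2ys=2\varepsilon^2\dk u$ and $s^2=\varepsilon^4\dk^2/\gamma^2$, hence
\[
\e^{2\varepsilon^2\dk u}=\e^{s^2}\,\e^{2ys-s^2}=\e^{\varepsilon^4\dk^2/\gamma^2}\sum_{\ell\ge 0}\frac{h_\ell(\gamma u)}{\ell!}\Bigl(\frac{\varepsilon^2\dk}{\gamma}\Bigr)^{\!\ell}.
\]
Collecting the two scalar prefactors yields $\e^{-\varepsilon^2\dk^2}\,\e^{\varepsilon^4\dk^2/\gamma^2}=\exp\bigl(\varepsilon^2\dk^2(\varepsilon^2/\gamma^2-1)\bigr)$, exactly the prefactor appearing in the statement.

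It then remains to express the surviving factor $\e^{-\varepsilon^2 u^2}h_\ell(\gamma u)$ through the HermiteGF functions. By definition $\He_\ell(u)=t^{\ell/2}(2^\ell\ell!)^{-1/2}h_\ell(\gamma u)\e^{-\varepsilon^2 u^2}$, so $h_\ell(\gamma u)\e^{-\varepsilon^2 u^2}=t^{-\ell/2}\sqrt{2^\ell\ell!}\,\He_\ell(u)$; substituting this and simplifying $\sqrt{2^\ell\ell!}/\ell!=\sqrt{2^\ell}/\sqrt{\ell!}$ converts the series into $\sum_{\ell\ge 0}\frac{\varepsilon^{2\ell}\sqrt{2^\ell}}{\gamma^\ell\sqrt{t^\ell\ell!}}\dk^\ell\He_\ell(u)$, which is precisely \eqref{eq:RBFhermiteExpansion}. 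The interpolant formula \eqref{eq:interpolantExpansion} follows immediately by linearity: multiply \eqref{eq:RBFhermiteExpansion} by $\alpha_k$, sum over $k=1,\dots,N$, and use $s(x)=\sum_k\alpha_k\phi_k(x)$. I do not anticipate a genuine obstacle: the argument is a direct substitution into a classical identity, and the only real work is keeping track of the constants ($t$, $\gamma$, the powers of $2$ and the factorials) and recording the one-line convergence remark that makes the term-by-term manipulation valid.
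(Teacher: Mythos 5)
Your proposal is correct and follows essentially the same route as the paper: both apply the classical Hermite generating function with $a=\varepsilon^2\Delta_k/\gamma$ and $b=\gamma(x-x_0)$ and then rewrite $h_\ell(\gamma(x-x_0))\e^{-\varepsilon^2(x-x_0)^2}$ in terms of the HermiteGF functions, the only cosmetic difference being that you expand the Gaussian forward while the paper verifies the claimed series backward. Your added convergence remark via the Hermite bound is a small bonus the paper leaves implicit.
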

\begin{proof}
The Hermite polynomial's generating function is given by (see e.g.~\cite[Expr. 22.9.17]{abramowitz1964handbook}),
 \begin{equation*}
  \mathrm{e}^{2ba - a^2} = \sum_{\ell \geq 0} \frac{a^\ell}{\ell!} h_\ell(b), \quad \forall a, b \in \mathbb{R}.
 \end{equation*}
Choosing $a = \frac{\varepsilon^2 \dk}{\gamma}$ and $b = \gamma (x - x_0)$, we obtain
\begin{equation}\label{eq:hgf_expansion_proof}
 \sum_{\ell \geq 0} \frac{\varepsilon^{2\ell}}{\gamma^\ell \ell!}\dk^\ell h_\ell(\gamma (x - x_0)) = \exp\left(2\varepsilon^2\dk (x-x_0) - \frac{\varepsilon^4 \dk^2}{\gamma^2}\right)
\end{equation}
Hence, we get
\begin{align*}
&\exp\left(\varepsilon^2 \dk^2\left(\frac{\varepsilon^2}{\gamma^2}-1\right)\right) \sum_{\ell \geq 0}\frac{\varepsilon^{2\ell} \sqrt{2^\ell}}{\gamma^\ell \sqrt{t^{\ell}\ell!} }\dk^\ell\He_\ell(x - x_0)  \\
&= \exp\left(\varepsilon^2 \dk^2\left(\frac{\varepsilon^2}{\gamma^2}-1\right)\right) \sum_{\ell \geq 0}\frac{\varepsilon^{2\ell} }{\gamma^\ell \ell!}\dk^\ell h_\ell(\gamma (x - x_0))\mathrm{e}^{-\varepsilon^2(x - x_0)^2} 
 \stackrel{\eqref{eq:hgf_expansion_proof}}{=} \mathrm{e}^{-\varepsilon^2(x - x_k)^2},
\end{align*}
which proves expansion \eqref{eq:RBFhermiteExpansion}. Using expansion \eqref{eq:RBFhermiteExpansion} in the interpolant \eqref{eq:RBFinterpolation}, we get the representation \eqref{eq:interpolantExpansion}.
\end{proof} 

\begin{remark}[Basis centering]
Hermite polynomials are symmetric with respect to the axis $x=0$. Due to its growth behavior, 
it is advantageous to have the basis centered around this point of symmetry, that is, to use the translation $x_0 = \frac{B-A}{2}$, where $[A,B]$ is the interval of interest for evaluating the function $f$.
\end{remark}

\begin{remark}[The parameter $\gamma$]
 The parameter $\gamma>0$ in the basis $\{\He_{\ell}\}_{\ell\ge0}$ allows control over the evaluation domain of the Hermite polynomials. When choosing it, one has to consider two counteracting effects: For small values of 
 $\gamma$, 
ill-conditioning appears since the values of the basis functions at the collocation points are too similar. On the other hand, Hermite polynomials take very large values on large domains which can lead to an overflow. An optimal balance depends on the particular function and the number of basis functions.
 \end{remark}
 
\subsection{Multivariate HermiteGF expansion of anisotropic Gaussians}
The HermiteGF expansion can be easily extended to higher dimensions for the case of isotropic Gaussians, using tensor products of 1D physicists' Hermite polynomials,
\begin{equation*}
h_{\lbb}(\bx) = h_{\ell_1}(x_1) \cdots \cdot h_{\ell_d}(x_d),
\quad \lbb\in\mathbb{N}^d,\quad \bx\in\mathbb{R}^d.
\end{equation*}
However, finding a stable interpolant for anisotropic Gaussian functions of the type
\[
\phi_{\mathbf{q}}(\bx) = \exp(-(\bx-\bq)^TE^TE(\bx-\bq)),\qquad \bx,\mathbf{q}\in\mathbb{R}^d,
\]
is a more challenging task. A similar question was raised in \cite[$\mathsection$ 8.5]{fasshauer2012stable}, however, without further investigation. McCourt \& Fasshauer \cite{mccourt2017stable} considered anisotropic Gaussians with diagonal shape matrix $E$ using Mercer expansion theory, but this result has not been extended to the case of arbitrary $E$. 
Analogously to the 1D case, we define the multivariate version of our \emph{HermiteGF functions} by
\begin{equation*}
 H_{ \lbb }^{G, E,t}(\bx) = \frac{t^{\vert \lbb \vert/2}}{\sqrt{2^{\vert \lbb \vert} \lbb!}}h_{\lbb}(G^T \bx) \exp(-\bx^T E^T E \bx),
\end{equation*}
where $G, E \in \mathbb{R}^{d\times d}$ are arbitrary invertible matrices. 


\begin{proposition}[HermiteGF expansion of anisotropic Gaussians]
Let $\bq \in \mathbb{R}^d$ and $E, G \in \mathbb{R}^{d \times d}$ invertible matrices.
Consider the anisotropic Gaussian $\phi_{\bq}(\bx)$. Then, for any 
shift $\bx_0 \in \mathbb{R}^d$ the following relation holds 
pointwise in $\bx\in\mathbb{R}^d$:
 \begin{equation}
  \phi_{\bq}(\bx) = \exp(\bDelta_{\bq}^T (\tilde{G}-E^TE)\bDelta_{\bq})
  \cdot \sum_{\lbb \in \mathbb{N}^d} \frac{(G^{-1}E^T E\bDelta_{\bq})^{\lbb} \sqrt{2^{\vert \lbb \vert}}}{\sqrt{t^{\vert \lbb \vert}\lbb!}} \haggf(\bx - \bx_0),  
  \label{eq:RBFhermiteExpansionAnisotropic}
 \end{equation}
 where $\bDelta_{\bq} = \bq - \bx_0$, $\tilde{G} = E^T E G^{-T} G^{-1} E^T E$.
\end{proposition}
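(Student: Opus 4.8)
The plan is to reduce \eqref{eq:RBFhermiteExpansionAnisotropic} to the scalar generating function $\e^{2ba-a^{2}}=\sum_{\ell\ge0}\frac{a^{\ell}}{\ell!}h_{\ell}(b)$ that was already used in the proof of \cref{th:HermiteGF_expansion}, in three stages: first tensorize that identity to $d$ variables; then substitute the definition of $\haggf$ into the series on the right-hand side and observe that the $t$-, $2^{|\lbb|}$- and one $\sqrt{\lbb!}$-factor cancel against the normalization constant; and finally collect all exponential prefactors by a short computation with the quadratic forms.

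For the first stage, since $h_{\lbb}(\bx)=\prod_{j=1}^{d}h_{\ell_{j}}(x_{j})$, multiplying the scalar identity over $j=1,\dots,d$ gives the multivariate generating function
\begin{equation*}
\sum_{\lbb\in\mathbb{N}^{d}}\frac{\mathbf{a}^{\lbb}}{\lbb!}\,h_{\lbb}(\mathbf{b})=\exp\!\left(2\mathbf{a}^{T}\mathbf{b}-\mathbf{a}^{T}\mathbf{a}\right),\qquad \mathbf{a},\mathbf{b}\in\mathbb{R}^{d},
\end{equation*}
where reordering the finite product into a sum over $\mathbb{N}^{d}$ is justified since each scalar factor is a power series in $a_{j}$ with infinite radius of convergence.

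For the second stage I would insert $\haggf(\bx-\bx_{0})=\frac{t^{|\lbb|/2}}{\sqrt{2^{|\lbb|}\lbb!}}\,h_{\lbb}\!\big(G^{T}(\bx-\bx_{0})\big)\exp\!\big(-(\bx-\bx_{0})^{T}E^{T}E(\bx-\bx_{0})\big)$ into the sum of \eqref{eq:RBFhermiteExpansionAnisotropic}. The product of the coefficient $\frac{\sqrt{2^{|\lbb|}}}{\sqrt{t^{|\lbb|}\lbb!}}$ with the normalization $\frac{t^{|\lbb|/2}}{\sqrt{2^{|\lbb|}\lbb!}}$ collapses to $\frac{1}{\lbb!}$ (using $t>0$), so the series becomes
\begin{equation*}
\exp\!\big(-(\bx-\bx_{0})^{T}E^{T}E(\bx-\bx_{0})\big)\sum_{\lbb\in\mathbb{N}^{d}}\frac{\big(G^{-1}E^{T}E\bDelta_{\bq}\big)^{\lbb}}{\lbb!}\,h_{\lbb}\!\big(G^{T}(\bx-\bx_{0})\big).
\end{equation*}
Applying the multivariate generating function with $\mathbf{a}=G^{-1}E^{T}E\bDelta_{\bq}$ and $\mathbf{b}=G^{T}(\bx-\bx_{0})$, and using $G^{-T}G^{T}=I$ together with $G^{-T}G^{-1}=(GG^{T})^{-1}$, yields $\mathbf{a}^{T}\mathbf{b}=\bDelta_{\bq}^{T}E^{T}E(\bx-\bx_{0})$ and $\mathbf{a}^{T}\mathbf{a}=\bDelta_{\bq}^{T}E^{T}EG^{-T}G^{-1}E^{T}E\bDelta_{\bq}=\bDelta_{\bq}^{T}\tilde{G}\bDelta_{\bq}$.

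For the last stage I would multiply by the prefactor $\exp\!\big(\bDelta_{\bq}^{T}(\tilde{G}-E^{T}E)\bDelta_{\bq}\big)$: the two $\bDelta_{\bq}^{T}\tilde{G}\bDelta_{\bq}$ contributions cancel and, using the symmetry of $E^{T}E$, the total exponent simplifies to
\begin{equation*}
-(\bx-\bx_{0})^{T}E^{T}E(\bx-\bx_{0})+2\bDelta_{\bq}^{T}E^{T}E(\bx-\bx_{0})-\bDelta_{\bq}^{T}E^{T}E\bDelta_{\bq}=-\big((\bx-\bx_{0})-\bDelta_{\bq}\big)^{T}E^{T}E\big((\bx-\bx_{0})-\bDelta_{\bq}\big).
\end{equation*}
Since $(\bx-\bx_{0})-\bDelta_{\bq}=\bx-\bq$, this equals $-(\bx-\bq)^{T}E^{T}E(\bx-\bq)$, i.e.\ $\phi_{\bq}(\bx)$, proving \eqref{eq:RBFhermiteExpansionAnisotropic}. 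I do not expect a genuine obstacle; the one point that needs care is the bookkeeping of transposes in the matrix algebra, so that the cross term comes out as exactly $2\bDelta_{\bq}^{T}E^{T}E(\bx-\bx_{0})$ and the prefactor quadratic form reproduces $\tilde{G}=E^{T}EG^{-T}G^{-1}E^{T}E$ precisely.
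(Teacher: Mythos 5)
Your proposal is correct and follows essentially the same route as the paper: substitute $\mathbf{a}=G^{-1}E^{T}E\bDelta_{\bq}$ and $\mathbf{b}=G^{T}(\bx-\bx_{0})$ into the multivariate generating function $\sum_{\lbb}\frac{\mathbf{a}^{\lbb}}{\lbb!}h_{\lbb}(\mathbf{b})=\exp(2\mathbf{b}^{T}\mathbf{a}-\mathbf{a}^{T}\mathbf{a})$, cancel the $t$-, $2^{|\lbb|}$- and factorial normalizations, and collect the quadratic forms in the exponent. The only cosmetic difference is that you justify the multivariate identity by tensorizing the scalar generating function, whereas the paper cites the anisotropic generating function of Dietert--Keller--Troppmann/Hagedorn with $A=\mathrm{Id}_d$, which reduces to exactly the same tensor-product formula.
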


\begin{proof}
We use the following anisotropic Hermite generating function (see 
\cite[Lemma 5]{dietert2017invariant} or \cite[Theorem 3.1]{hagedorn2015generating} with 
$A = \mathrm{Id}_d$): 
\begin{equation}
  \sum_{\lbb \in \mathbb{N}^d} \frac{\mathbf{a}^{\lbb}}{\lbb!} h_{\lbb}(\mathbf{b}) = \exp(2\mathbf{b}^T\mathbf{a} - \mathbf{a}^T\mathbf{a}),\qquad\forall \mathbf{a}, \mathbf{b} \in \mathbb{R}^d.
  \label{eq:hermiteGenFuncAnisotropic}
  \end{equation}
We denote $\mathbf{b} = G^T(\bx - \bx_0)$ and $\mathbf{a} = G^{-1}E^T E\bDelta_{\bq}$. Then, using \eqref{eq:hermiteGenFuncAnisotropic}, we get
\begin{equation*}
 \sum_{\lbb\in\mathbb{N}^d}\frac{(G^{-1}E^T E\bDelta_{\bq})^{\lbb}}{\lbb!} h_{\lbb}(G^T (\bx - \bx_0)) = \exp(2(\bx - \bx_0)^TE^T E\bDelta_{\bq} - \bDelta_{\bq}^T\tilde{G}\bDelta_{\bq}).
\end{equation*}
We observe that
\begin{align*}
    2\bx^TE^TE\bq &= 2(\bx - \bx_0)^TE^TE\bDelta_{\bq} + 2\bx^TE^TE\bx_0 + 2\bx_0^TE^TE\bDelta_{\bq},
 \\
  -\bx^T E^T E \bx &= -(\bx - \bx_0)^TE^TE (\bx - \bx_0) - 2\bx^T E^TE\bx_0 + \bx_0^TE^TE\bx_0.
\end{align*}
Putting everything together we arrive to \eqref{eq:RBFhermiteExpansionAnisotropic}.
\end{proof}
This expansion provides a new powerful tool for dealing with anisotropic approximation. Note that the standard multidimensional isotropic Gaussian interpolation corresponds to the following matrix $E$:
\begin{equation*}
 E_{\mathrm{isotropic}} = \eps \mathrm{Id}_d.
\end{equation*}

\subsection{Mehler's formula (nD)}
\label{sec:mehler_nD}
In this section, we investigate the behavior of the HermiteGF basis functions with large index $\lbb$ approaching infinity. To be able to decide where to cut the HermiteGF expansion for numerical purposes, it is useful to quantify the size of the tail of the truncated expansion. 
For that, we take a look at the magnitude of the values of the HermiteGF basis. We consider the infinite-dimensional vector that contains the values of all basis functions $\haggf(\bx)$ at a certain point $\bx \in \mathbb{R}^d$. Its Euclidean norm can be computed analytically using a multivariate extension of Mehler's formula for Hermite polynomials.
\begin{theorem}[Bilinear generating function]
 \label{th:mehlerMultidim}
 The following relation holds for all $t \in (0, 1)$ and all $\bx, \by \in \mathbb{R}^d$
 \begin{equation}
  \sum_{\vert \lbb \vert = 0}^{\infty} \frac{t^{\vert \lbb \vert} h_{\lbb}(\bx)h_{\lbb}(\by)}{2^{\vert \lbb \vert} \lbb!} = \frac{\exp\left(\frac{t}{1-t^2}(\bx^T \by + \by^T \bx) - \frac{t^2}{1-t^2}(\Vert \bx \Vert_2^2 + \Vert \by \Vert_2^2)\right)}{(1 - t^2)^{d/2}}.
  \label{eq:mehlerMultidim}
 \end{equation}
\end{theorem}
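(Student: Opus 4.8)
The plan is to reduce the $d$-dimensional identity to the classical one-dimensional Mehler formula by exploiting the tensor-product structure of the multivariate Hermite polynomials, and then to reassemble the right-hand side. \emph{Step 1 (the one-dimensional case):} I would first establish, for $t\in(0,1)$ and $x,y\in\mathbb{R}$,
\begin{equation*}
\sum_{n\ge0}\frac{t^n\,h_n(x)\,h_n(y)}{2^n\,n!}=\frac{1}{\sqrt{1-t^2}}\exp\!\left(\frac{2txy-t^2(x^2+y^2)}{1-t^2}\right).
\end{equation*}
A self-contained derivation uses the Gaussian integral representation $h_n(x)=\frac{2^n}{\sqrt{\pi}}\int_{\mathbb{R}}(x+\mathrm{i}u)^n\e^{-u^2}\,\mathrm{d}u$, which is checked directly from the recurrence: one substitutes it for both $h_n(x)$ and $h_n(y)$, interchanges the summation with the resulting double integral (legitimate because $t<1$ makes the integrand dominated by $\e^{-(1-t)(u^2+v^2)}$ up to lower-order factors), sums the exponential series, and evaluates the remaining two-dimensional Gaussian integral by completing the square in the complex quadratic form $-u^2-v^2-2tuv+2\mathrm{i}t(yu+xv)+2txy$, whose coefficient matrix has determinant $1-t^2>0$. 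This identity is of course classical; see e.g.\ \cite{abramowitz1964handbook}.

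\emph{Step 2 (tensorization).} I would then use $h_{\lbb}(\bx)=\prod_{j=1}^d h_{\ell_j}(x_j)$, $\lbb!=\prod_j\ell_j!$, $2^{\vert\lbb\vert}=\prod_j2^{\ell_j}$ and $t^{\vert\lbb\vert}=\prod_j t^{\ell_j}$, so that every summand on the left of \eqref{eq:mehlerMultidim} factors over the coordinates. The Hermite bound $\vert h_\ell(x)\vert\le\e^{x^2/2}c\sqrt{2^\ell\ell!}$ quoted above gives
\begin{equation*}
\left\vert\frac{t^{\vert\lbb\vert}h_{\lbb}(\bx)h_{\lbb}(\by)}{2^{\vert\lbb\vert}\lbb!}\right\vert\le c^{2d}\,\e^{(\Vert\bx\Vert_2^2+\Vert\by\Vert_2^2)/2}\,t^{\vert\lbb\vert},
\end{equation*}
and $\sum_{\lbb\in\mathbb{N}^d}t^{\vert\lbb\vert}=(1-t)^{-d}<\infty$; hence the multi-index series converges absolutely and may be rearranged into the $d$-fold product $\prod_{j=1}^d\bigl(\sum_{n\ge0}\frac{t^n h_n(x_j)h_n(y_j)}{2^n n!}\bigr)$.

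\emph{Step 3 (reassembly) and the main obstacle.} Applying Step 1 to each factor and multiplying, the prefactors combine to $(1-t^2)^{-d/2}$ and the exponents add up to $\sum_{j=1}^d\frac{2tx_jy_j-t^2(x_j^2+y_j^2)}{1-t^2}=\frac{2t\,\bx^T\by-t^2(\Vert\bx\Vert_2^2+\Vert\by\Vert_2^2)}{1-t^2}$, and since $\bx^T\by+\by^T\bx=2\bx^T\by$ for real vectors this is exactly the right-hand side of \eqref{eq:mehlerMultidim}. The only genuinely nontrivial ingredient is the one-dimensional Mehler formula; everything else is tensorization bookkeeping plus the elementary absolute-convergence estimate above. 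So the main obstacle, for a self-contained treatment, is the Gaussian-integral computation of Step 1 — completing the square correctly in the complex quadratic form and selecting the right branch of $\sqrt{1-t^2}$ — but this is a standard classical calculation and can simply be cited if preferred.
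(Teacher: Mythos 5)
Your argument is correct, and it takes a somewhat different route from the paper's proof. The paper does not pass through the one-dimensional Mehler formula at all: it extends Watson's integral-representation proof directly to $d$ dimensions, writing $h_{\lbb}$ via the multivariate Rodrigues formula and an inverse Fourier transform of the Gaussian, exchanging the multi-index sum with a $2d$-dimensional Gaussian Fourier integral, and then evaluating that integral coordinate-wise by applying Watson's bivariate formula \eqref{eq:watsonFormula} $d$ times, finishing with an algebraic identity for the exponent. You instead factorize at the level of the series: you establish (or cite) the classical 1D Mehler identity, use the tensor-product structure $h_{\lbb}(\bx)=\prod_j h_{\ell_j}(x_j)$ together with the Hermite bound $\vert h_\ell(x)\vert\le c\,\e^{x^2/2}\sqrt{2^\ell\ell!}$ to get absolute convergence (your domination by $c^{2d}\e^{(\Vert\bx\Vert_2^2+\Vert\by\Vert_2^2)/2}t^{\vert\lbb\vert}$ and $\sum_{\lbb}t^{\vert\lbb\vert}=(1-t)^{-d}$ is exactly what is needed to justify the rearrangement of the degree-ordered sum into a $d$-fold product), and then multiply the one-dimensional identities; your Step 1 computation (the quadratic form with matrix determinant $1-t^2$) is sound and is essentially Watson's calculation in disguise. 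What your route buys is modularity and economy: the only analytic content is the classical 1D formula, which can simply be cited, and the multivariate statement follows from bookkeeping plus an explicit convergence estimate that the paper leaves implicit when it interchanges sum and integral. What the paper's route buys is a self-contained multivariate derivation in the same Fourier/Rodrigues framework it already uses elsewhere (citing the Rodrigues formula of Dietert, Keller \& Troppmann), with the coordinate-wise factorization happening at the integral level rather than the series level. Both are valid; yours would be a legitimate, arguably simpler, alternative proof.
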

\begin{proof}
 We extend the 1D proof proposed by Watson in \cite[p. 4]{watson1933notes} to multiple dimensions. Applying the inverse Fourier transform to the Fourier transform of a normal distribution with $\sigma = 1/\sqrt{2}\mathrm{Id}_d$, we get
\begin{equation*}
\e^{\bx^T\bx} = \pi^{d/2}\int_{\mathbb{R}^d}\e^{2\pi \mathrm{i}\bx^T\bxi} \e^{-\pi^2\bxi^T\bxi} \mathrm{d}\bxi = \pi^{-d/2}\int_{\mathbb{R}^d}\e^{2 \mathrm{i}\bx^T\bxi} \e^{-\bxi^T\bxi} \mathrm{d}\bxi.
\end{equation*}
Hence, 
\[
 h_{\lbb}(\bx) = \e^{\bx^T\bx}(-\nabla)^{\lbb} \e^{-\bx^T\bx} = \frac{(-2\mathrm{i})^{\vert \lbb \vert}}{\pi^{d/2}}\e^{\bx^T \bx} \int_{\mathbb{R}^d}\bxi^{\lbb} \e^{2\mathrm{i}\bx^T\bxi - \bxi^T \bxi} \mathrm{d}\bxi,
\]
where we used the Rodrigues formula for multivariate tensor-product Hermite polynomials (see \cite[Expr. 11]{dietert2017invariant} with $M = \mathrm{Id}$). Then, 
\begin{align}
&
\sum_{\vert \lbb \vert = 0}^{\infty} \frac{t^{\vert \lbb \vert} h_{\lbb}(\bx)h_{\lbb}(\by)}{2^{\vert \lbb \vert} \lbb!} 
=\nonumber\\
&=\frac{\e^{\bx^T\bx + \by^T\by}}{\pi^d}\sum_{\vert \lbb \vert = 0}^{\infty} \frac{(-2t)^{\vert \lbb \vert}}{\lbb!} \int_{\mathbb{R}^d}\int_{\mathbb{R}^d} \bxi_{\bx}^{\lbb} \bxi_{\by}^{\lbb}\e^{2\mathrm{i}(\bx^T\bxi_{\bx} + \by^T\bxi_{\by})} \e^{-\bxi_{\bx}^T\bxi_{\bx} - \bxi^T_{\by}\bxi_{\by}}  \mathrm{d}\bxi_{\bx} \mathrm{d}\bxi_{\by} \nonumber \\
&= \frac{\e^{\bx^T\bx + \by^T\by}}{\pi^d} \int_{\mathbb{R}^d}\int_{\mathbb{R}^d} \e^{-2t\bxi^T_{\bx}\bxi_{\by}}\e^{2\mathrm{i}(\bx^T\bxi_{\bx} + \by^T\bxi_{\by})} \e^{-\bxi_{\bx}^T\bxi_{\bx} - \bxi^T_{\by}\bxi_{\by}}  \mathrm{d}\bxi_{\bx} \label{eq:mehlerIntegral} \mathrm{d}\bxi_{\by},
\end{align}
where we used the Taylor series of the exponential function. Recall the following formula for 
a bivariate Gaussian Fourier integral (see \cite[p. 4]{watson1933notes}):
\begin{equation}
 \int_{-\infty}^{\infty} \int_{-\infty}^{\infty} g_{(x, y)}(u,v) \mathrm{d}v \mathrm{d}u = 
 \frac{\pi\e^{-(x^2 + y^2)/2}}{\sqrt{1-t^2}}\exp\!\left(\frac{x^2 - y^2}{2} - \frac{(x-yt)^2}{1-t^2}\right).
 \label{eq:watsonFormula}
\end{equation}
with
\[
 g_{(x, y)}(u,v) = \exp(-u^2-2tuv - v^2 + 2\mathrm{i}xu + 2\mathrm{i}yv).
\]
\noindent
Using \eqref{eq:watsonFormula} $d$ times for the integral \eqref{eq:mehlerIntegral} 
together with the algebraic identity
\[
-\tfrac12(x^2+y^2) + \tfrac12(x^2-y^2) - \frac{(x-yt)^2}{1-t^2} = 
\frac{2txy-x^2-y^2}{1-t^2}
\]
yields \eqref{eq:mehlerMultidim}.

\end{proof}
With the help of the multidimensional Mehler's formula we can now compute the 
square of the Euclidean norm 
\[
\haggfe_{\mathrm{lim}}(\bx) := 
\sum_{\vert\lbb\vert = 0}^\infty \haggf(\bx)^2 = 
\sum_{\vert \lbb \vert=0}^{\infty} \frac{t^{\vert \lbb \vert}}{2^{|\lbb|}\lbb!}h^2_{\lbb}(G^T\bx)\exp(-2\bx^TE^TE\bx)
\]
of the infinite vector containing the values of all basis functions $\haggf(\bx)$ with $\lbb \in \mathbb{N}^d$ at some point $\bx\in\mathbb{R}^d$. Indeed,
\begin{equation}
 \haggfe_{\mathrm{lim}}(\bx) = \frac{\exp\left(-2\bx^TE^TE\bx + \frac{2t\Vert G^T\bx\Vert_2^2}{1+t}\right)}{(1 - t^2)^{d/2}}.
 \label{eq:HinftyNorm}
\end{equation}

We take a look if this value matches our numerical result. One can see in \cref{fig:mehlerLimit} that for the square domain the limit value is matched for different values of $\eps$, $t$. 

\begin{figure}[h]
 \centering
\begin{subfigure}[t]{0.42\textwidth} 
\includegraphics[scale=0.33]{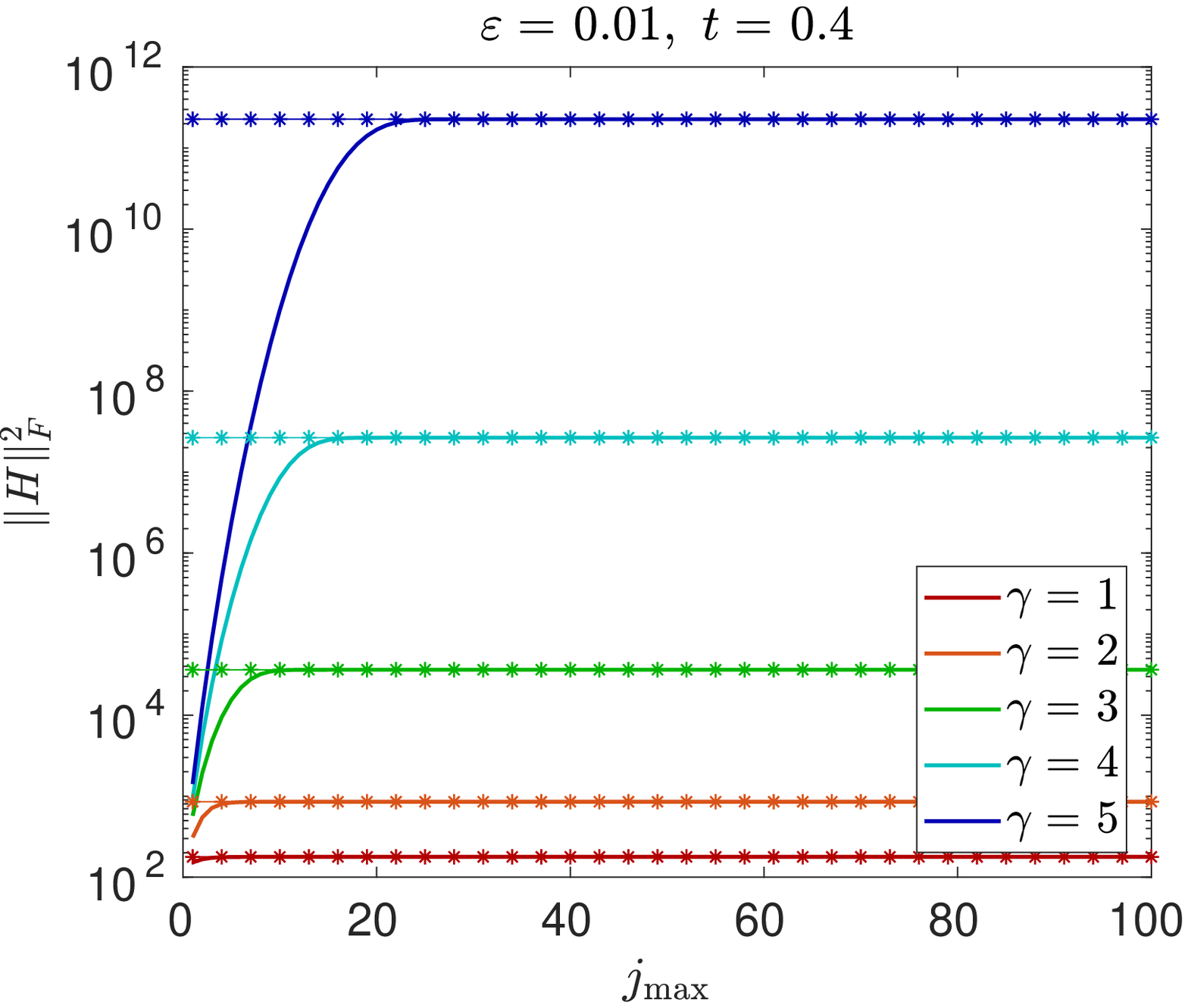} 
 \caption{$\eps = 0.01$, $t = 0.4$.}
\end{subfigure} 
\begin{subfigure}[t]{0.45\textwidth} 
\includegraphics[scale=0.33]{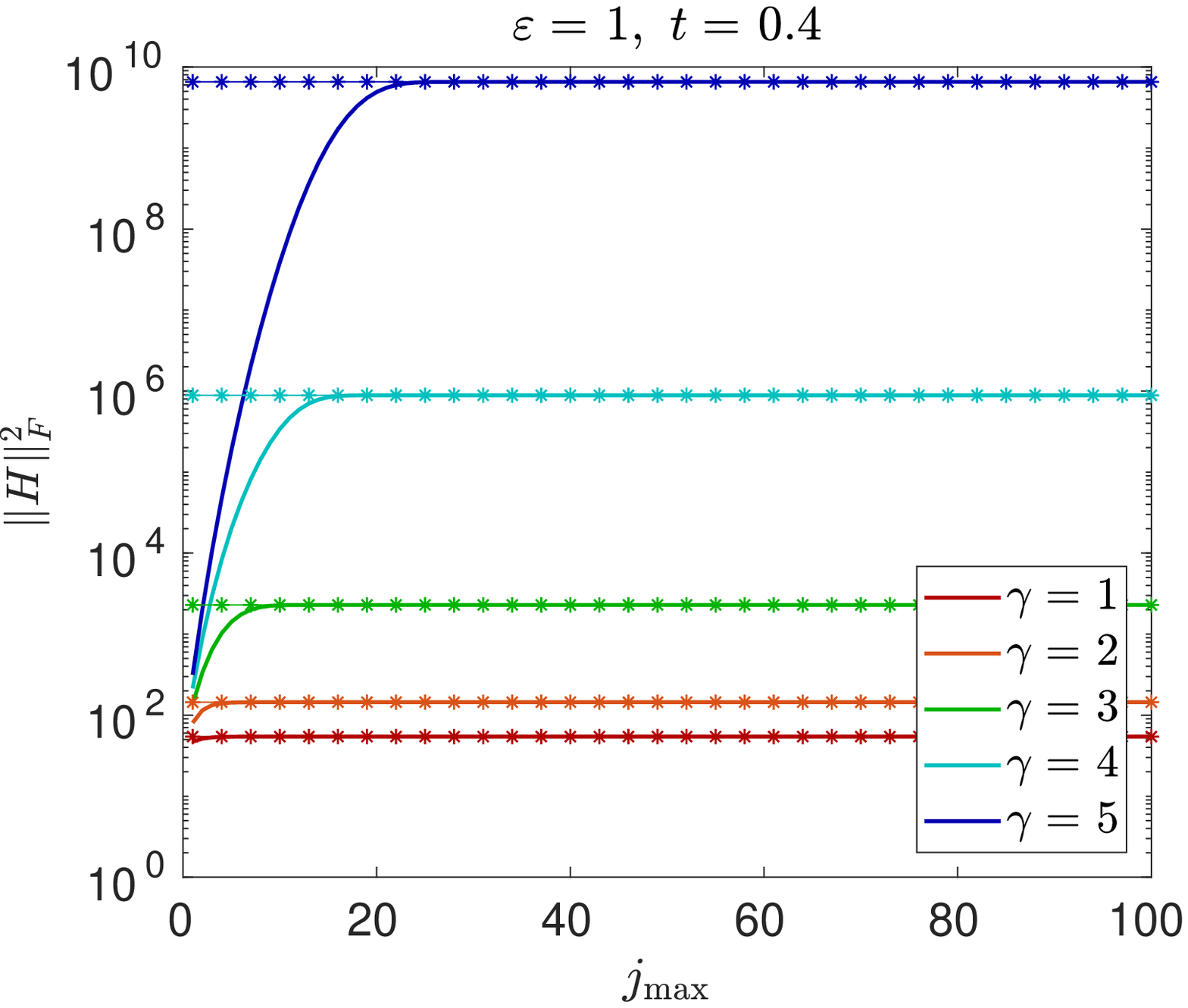}
 \caption{$\eps = 1$, $t = 0.4$.}
\end{subfigure} 
\caption{Frobenius norm of the matrix $H$ of values of the basis functions at $100$ Halton nodes $\bx_k$ on a square domain $[-1, 1] \times [-1, 1]$. The solid line is the value corresponding to only the basis functions with a degree up to $\jmax$. The \texttt{*} corresponds to the analytically computed value.} 
\label{fig:mehlerLimit}
\end{figure}
\section{Stabilization of the RBF interpolation}\label{sec:stabilization}
In this section, we derive a numerical stabilization algorithm for RBF interpolation based on the HermiteGF expansion. The main idea is to represent the RBF interpolant in the more stable basis $\{\haggfe_{\lbb}\}_{\lbb\geq 0}$. For appropriately chosen parameters $G$ and $t$, we expect it to be better conditioned. We now first derive the HermiteGF-QR algorithm in 1D and then generalize it to a multidimensional form.

\subsection{HermiteGF-QR (1D)}\label{subsec:QR1D}
In 1D we denote by
\[
\Phi(x, X^{\mathrm{cen}}) = \begin{pmatrix}
  \phi_1(x),&\ldots,&\phi_N(x)
 \end{pmatrix}
 \]
 the vector of Gaussians with center points $X^{\mathrm{cen}}$ evaluated at $x\in\mathbb{R}$ and write the stabilization expansion \eqref{eq:RBFhermiteExpansion}
as an infinite matrix-vector product
\begin{equation}
\Phi(x, X^{\mathrm{cen}})  =
\He(x-x_0)\,
 B(\eps, \gamma, t, X^{\mathrm{cen}})
 \label{eq:expansionMatrixForm}
\end{equation}
where the vector 
\[
\He(x-x_0) = \big(
  \He_0(x - x_0), \,
  \He_1(x - x_0), \,
  \ldots
 \big)
\]
contains all the elements of the polynomial basis $\{\He_\ell\}_{\ell\ge0}$ evaluated in the translated point $x-x_0$ and 
\begin{equation*}
 B(\eps, \gamma, t, X^{\mathrm{cen}})_{\ell k} = \exp\left(\varepsilon^2 \Delta_k^2\left(\frac{\varepsilon^2}{\gamma^2} - 1\right)\right) \frac{\varepsilon^{2\ell}\sqrt{2^{\ell}}}{\gamma^{\ell} \sqrt{t^\ell\ell!}}\Delta_k^{\ell}
\end{equation*}
is an $\infty\times N$ matrix.
The major part of the ill-conditioning is now confined in the matrix $B$. Since $B$ is independent of the point $x$ where the basis function is evaluated, both the evaluation and interpolation matrix can be expressed in the form \eqref{eq:expansionMatrixForm} with the same matrix $B$. 

We follow the RBF-QR approach and further split 
\[
B^T = CD
\] 
into a well-conditioned full $N\times\infty$ matrix $C$ and an infinite diagonal matrix $D$, where all harmful effects are confined in $D$. In the case of expansion~\eqref{eq:RBFhermiteExpansion}, the following setup follows naturally from the Chebyshev-QR theory \cite[$\mathsection$ 4.1.3]{fornberg2011stable},
\begin{align*}
C_{k\ell} = \exp\left(\epsilon^2 \Delta_k^2\left(\frac{\epsilon^2}{\gamma^2} - 1\right)\right) \frac{\Delta_k^{\ell}}{L^{\ell}}, \quad D_{\ell \ell} = \frac{\epsilon^{2\ell}\sqrt{2^{\ell}}}{\gamma^{\ell} \sqrt{t^\ell\ell!}} \,L^{\ell}.
\end{align*}
Here we also divide each coefficient by the radius of the domain $L$ containing the center points in order to avoid ill-conditioning in $C$ coming from taking high powers of $x_k$. That might be dangerous when the domain is too large, however, it still extends the range of domain diameters possible. 

The goal is now to find a basis $\{\psi_j\}$ spanning the same space as $\{\phi_k\}$ but yielding a better conditioned collocation matrix. In particular, we need an invertible matrix~$X$ such that $X^{-1}\Phi(x)^T$ is better conditioned. 
Let us perform a QR-decomposition on $C = QR$. Then we get,
\begin{equation*}
 \Phi(x)^T = CD\He(x-x_0)^T = Q
 \begin{pmatrix}
 R_1 & R_2                     
 \end{pmatrix}
 \begin{pmatrix}
  D_1 & 0 \\
  0 & D_2
 \end{pmatrix}\He(x - x_0)^T
\end{equation*}
where $R_1$ and $D_1$ are $N\times N$ matrices containing the upper (left) block 
of the infinite matrices $R$ and $D$, respectively, while $R_2$ and $D_2$ assemble the remaining entries. 
Consider $X = QR_1D_1$. The new basis $\Psi(\bx)^T := X^{-1} \Phi(x)^T$ can be formed as,
\begin{align*}
\Psi(x)^T & = D_1^{-1} R_1^{-1} Q^{\mathrm{H}}\Phi(x)^T \\
&=  \ D_1^{-1} R_1^{-1} Q^{\mathrm{H}} Q
 \begin{pmatrix}
  R_1 D_1 &
  R_2 D_2
 \end{pmatrix}\He(x - x_0)^T\\
&= \begin{pmatrix}
   \mathrm{Id} &
   D_1^{-1}R_1^{-1}R_2 D_2
  \end{pmatrix}
\He(x - x_0)^T.
\end{align*}
To avoid under/overflow in the computation of $D_1^{-1}R_1^{-1}R_2 D_2$, we form the two matrices $\tilde R = R_1^{-1}R_2$ and $\tilde D$ with elements 
\begin{equation*}
\tilde D_{i,j} = \left(\frac{\varepsilon^2 L}{\gamma} \sqrt{\frac{2}{t}} \right)^{N+j-i}
\sqrt{\frac{i!}{(N+j)!} },
\qquad 1\le i\le N, \quad j\ge 1,  
\end{equation*}
and compute their Hadamard product, $\tilde R .* \tilde D$.  
Despite the harmful effects contained in $D$, the resulting term $D_1^{-1} R_1^{-1}R_2 D_2 = \tilde R .* \tilde D$ is then harmless.
\subsection{Multivariate anisotropic HermiteGF-QR}
In this section, we derive an analog of the HermiteGF-QR algorithm for the multivariate case. Since we are now dealing with matrices, in the general case, it is impossible to separate~$E$ from $\bx_k$ in $(E\bx_k)^{\lbb}$ as we did before. Therefore, the flow of the HermiteGF-QR does not apply directly. However, it is still possible to tackle some part of the ill-conditioning analytically. 
Consider the following splitting of the matrix $G^{-1}E^TE \in \mathbb{R}^{d \times d}$:
\begin{equation*}
 G^{-1}E^TE = \mathrm{Diag} + \mathrm{Rem},
\end{equation*}
where $\mathrm{Diag}$ is the $d \times d$ diagonal matrix containing the diagonal elements of $G^{-1}E^T E$ and $\mathrm{Rem}$ contains the remaining off-diagonal terms. Denote 
\begin{equation*}
 \bv_k = (\mathrm{Id} + \mathrm{Diag}^{-1} \mathrm{Rem})\bDelta_k \quad \text{and} \quad \boldsymbol{\Delta}_k = \bx_k - \bx_0.
\end{equation*}
Then, it holds that
\begin{align*}
(G^{-1}E^T E\bDelta_k)^{\lbb} &= ((\mathrm{Diag} + \mathrm{Rem})\bDelta_k)^{\lbb} = \left(\mathrm{Diag}\left((\mathrm{Id} + \mathrm{Diag}^{-1} \mathrm{Rem})(\bx_k - \bx_0) \right)\right)^{\lbb} \\
&= \prod_{i=1}^d (\mathrm{Diag}_{ii} (\bv_k)_i)^{\ell_i} = \prod_{i=1}^d \mathrm{Diag}_{ii}^{\ell_i} (\bv_k)_i^{\ell_i} = \left(\prod_{i=1}^d \mathrm{Diag}_{ii}^{\ell_i} \right) \bv_k^{\lbb},
\end{align*}
where $\mathrm{Diag}^{-1} \mathrm{Rem}$ can be computed analytically. Denote $\mathbf{d_\mathrm{vec}} = \mathrm{diag}(\mathrm{Diag})$. 
Then, the generating function expansion \eqref{eq:RBFhermiteExpansionAnisotropic} can be written as:
\begin{align*}
\phi_k(\bx) &=\exp(-\bDelta_k^T E^T E \bDelta_k + \bDelta_k^T \tilde{G}\bDelta_k) \sum_{\lbb \in \mathbb{N}^d} \frac{(G^{-1}E^T E\bDelta_k)^{\lbb} \sqrt{2^{\vert \lbb \vert}}}{\sqrt{t^{\vert \lbb \vert}\lbb!}} \haggf(\bx - \bx_0) \\
 &= \exp(-\bDelta_k^T E^T E \bDelta_k + \bDelta_k^T \tilde{G}\bDelta_k) \sum_{\lbb \in \mathbb{N}^d} \frac{\mathbf{d}_\mathrm{vec}^{\lbb} \bv_k^{\lbb}\sqrt{2^{\vert \lbb \vert}}}{\sqrt{t^{\vert \lbb \vert}\lbb!}} \haggf(\bx - \bx_0).
\end{align*}
As before, we can write the expansion above as the infinite matrix-vector product
\begin{equation*}
 \Phi(\bx) =
  \haggfe(\bx - \bx_0)
 \, B(E, G, t, X^{\mathrm{cen}})
\end{equation*}
with
\begin{align*}
 B(E, G, t, X^{\mathrm{cen}})_{\lbb k} &= \exp(-\bDelta_k^T E^T E \bDelta_k + \bDelta_k^T \tilde{G} \bDelta_k) \frac{\mathbf{d}_\mathrm{vec}^{\lbb} \bv_k^{\lbb}\sqrt{2^{\vert \lbb \vert}}}{\sqrt{t^{\vert \lbb \vert}\lbb!}}.
\end{align*}  
As before, we write the transpose of the infinite matrix $B$ as a product $CD$ with  
\begin{equation}
 C_{k\lbb} = \exp(-\bDelta_k^T E^T E \bDelta_k + \bDelta_k^T \tilde{G}\bDelta_k) \bv_k^{\lbb}, \quad
 D_{\lbb \lbb} = \frac{\mathbf{d}_\mathrm{vec}^{\lbb} \sqrt{2^{\vert \lbb \vert}}}{\sqrt{t^{\vert \lbb \vert}\lbb!}}.
 \label{eq:CDanisotropic}
\end{equation}
The $d\times d$ matrix product $\mathrm{Diag}^{-1} \mathrm{Rem}$ contained in the vectors $\bv_k$ can be computed analytically. The diagonal part of the matrix $G^{-1}E^TE$, that is now in the matrix $D$ can be  handled in the exact same fashion as it has been done in 1D. In particular, we perform the QR-decomposition of the matrix $C=QR$, block decompose
\[
R = \begin{pmatrix}R_1 & R_2\end{pmatrix},\qquad D = \begin{pmatrix}D_1 & 0\\ 0 & D_2\end{pmatrix},
\]
such that the entries related to the first $N$ stabilizing basis functions are contained in the $N\times N$ matrices 
$R_1$ and $D_1$, and consider the preconditioner $X = QR_1D_1$. Hence, analogously to the HermiteGF-QR case, the new basis  can be formed as
\begin{equation*}
\Psi(\bx)^T := X^{-1} \Phi(\bx)^T = \begin{pmatrix}
   \mathrm{Id} &
   D_1^{-1}R_1^{-1}R_2 D_2
  \end{pmatrix}
\haggfe(\bx-\bx_0)^T.
\end{equation*}
The action of $D_1^{-1}$ and $D_2$ can again be computed as the Hadamard product with
\begin{equation}
 \tilde{D}_{ij} = \frac{D_{j+N, j+N}}{D_{ii}} \quad \text{with} \quad i \in \{1, \ldots, N\}, \,\, 
 j \ge 1.
 \label{eq:DtildeAnisotropic}
\end{equation}

\begin{remark}
 When the magnitude of $\bv_k$ gets too large, the matrix $C$ can also become ill-conditioned. To avoid that, one can increase the magnitude of the elements of $G$. Alternatively, one can add a scaling in $C$ which should then be compensated in the matrix $D$, similarly to the scaling with the domain size $L$ in the 1D version.
\end{remark}

\subsubsection{Isotropic HermiteGF-QR}
In the isotropic case, when $E = \epsilon \mathrm{Id}_d$ and $G = \gamma \mathrm{Id}_d$, the expressions for the matrices $C$ and $D$ can be written in a simpler way. In particular, in this case, we have
\[
 \mathrm{Diag} = \gamma^{-1}\epsilon^2 \mathrm{Id}_d, \quad \mathrm{Rem} = \mathbf{0}_d, \quad \text{and} \quad \bv_k = \bx_k - \bx_0 = \boldsymbol{\Delta}_k.
\]
Hence, the diagonal vector simplifies to $\mathbf{d_\mathrm{vec}}=\gamma^{-1}\epsilon^2(1,\ldots,1)$
and the elements of the matrices $C$ and $D$ take the form
\[
 C_{k\ell} = \exp\left(\eps^2\left(\frac{\eps^2}{\gamma^2} - 1\right) \Vert \bDelta_k \Vert_{\ell_2}^2\right)\bDelta_k^{\lbb}, \quad D_{\ell \ell} = \frac{(\sqrt{2}\gamma^{-1}\eps^2)^{\vert \lbb \vert}}{\sqrt{t^{\vert \lbb \vert} \lbb!}}.
\]
\subsection{HermiteGF interpolant} In the new basis, we can write the equivalent formulation of the interpolant \eqref{eq:RBFinterpolation} as
\[
 s(\bx) = \Psi(\bx)(\Psi^{\mathrm{col}})^{-1}\boldsymbol{f} \quad \text{with} \quad \Psi_{ij}^{\mathrm{col}} = \psi_j(\bx_i^{\mathrm{col}}).
\]
To compute the interpolant in the new formulation numerically, we have to cut the infinite expansion \eqref{eq:RBFhermiteExpansionAnisotropic}. We discuss the cut-off strategy in \cref{sec:criterion}.
\subsection{Alternative splitting based on the Vandemonde matrix}
\label{sec:vandermonde}
Instead of using a QR-decomposition of the matrix $C\in\mathbb{R}^{N\times\infty}$ one could also split it  as $C = \bar{E} W$, where $\bar{E} \in \mathbb{R}^{N \times N}$ is a diagonal matrix for the exponential part and $W \in \mathbb{R}^{N\times \infty}$ accounts for the polynomial contributions,
\begin{equation}
 \bar{E}_{kk} = \exp(-\bDelta_k^T E^T E \bDelta_k + \bDelta_k^T \tilde{G}\bDelta_k) \quad \text{and} \quad
 W_{k\ell} = \bv_k^{\lbb}.
 \label{eq:EW_vandermondeSplitting}
\end{equation}
We now decompose the original basis using this splitting,
\begin{align*}
 \Phi(\bx)^T &= C \begin{pmatrix}
                            D_1 & 0 \\
                            0 & D_2
                            \end{pmatrix}\haggfe(\bx - \bx_0)^T\\
 &= \bar{E} \begin{pmatrix}
                             W_1 & W_2
                            \end{pmatrix} \begin{pmatrix}
                            D_1 & 0 \\
                            0 & D_2
                            \end{pmatrix}\haggfe(\bx - \bx_0)^T \\*[1ex]
                            &= \bar{E}\begin{pmatrix}
                            W_1 D_1 & W_2 D_2
                            \end{pmatrix}\haggfe(\bx-\bx_0)^T,
\end{align*}
where $W_1 \in \mathbb{R}^{N \times N}$ and $W_2 \in \mathbb{R}^{N \times \infty}$. With the preconditioner $X_V = \bar{E} W_1 D_1$, the new basis reads as
\begin{equation*}
 \Psi_{V}(\bx)^T = \begin{pmatrix}
                                   \mathrm{Id} & D_1^{-1} W_1^{-1} W_2 D_2
                                   \end{pmatrix}\haggfe(\bx - \bx_0)^T.
\end{equation*}
One can see that
\[
 X_V = \bar{E}W_1D_1 = C_1 D_1 = QR_1 D_1= X,
\]
where $C_1$ is the first $N \times N$ block of $C$. Therefore, in exact arithmetic $\Psi$ and $\Psi_V$ are the same, however, in floating-point arithmetic the values of the bases might differ. However, we will use this alternative splitting for the derivation and analysis of a suitable cut-off criterion for the HermiteGF basis in the next section.
\section{Cut-off of the expansion}
\label{sec:criterion}

To make the RBF-QR methods usable for numerical computations, one has to cut the expansion \eqref{eq:RBFhermiteExpansionAnisotropic} at a certain polynomial degree $\jmax\in\mathbb{N}$ which in 1D also corresponds to the number of stabilizing basis functions $M$. In the multivariate setting, the 
number of basis functions $M$ equals
\[M=\begin{pmatrix}\jmax + d \\d\end{pmatrix}.\] 
However, choosing an efficient cut-off degree $\jmax$ is not a trivial task. We first derive a criterion that is analogous to the state-of-the-art criteria for other RBF-QR methods (\cite[Expr. 5.2]{fornberg2011stable}, \cite[Expr. 4.10]{fasshauer2012stable}). However, especially in the HermiteGF case, it turns out to be inefficient, i.e.\ it overestimates the number $\jmax$. We then derive a new cut-off criterion based on the theoretical framework presented in the previous sections. This new criterion allows to directly control the approximation error of the stable basis which is more efficient while still being effective.

\subsection{State-of-the-art criterion}
Similarly to \cite[Expr. 5.2]{fornberg2011stable}, \cite[Expr. 4.10]{fasshauer2012stable} we take a look at the matrix $\tilde{D}$ that contains the effects of $D_1^{-1}$ and $D_2$. Recall that for the QR methods in \cref{subsec:QR1D} the matrix $\tilde{D}$ is then multiplied element-wise with the matrix $\tilde{R}$. We stop once all elements of the new block of $\tilde{D}$ are below machine precision, i.e.\,
\begin{equation}
 \max_{i=1\ldots N, \vert \mathbf{j} \vert \geq \jmax+1} \tilde{D}_{ij}< \eps_{\mathrm{mach}}.
 \label{eq:jmaxOld}
\end{equation}
The criterion \eqref{eq:jmaxOld} guarantees that all additional columns that could be added to $D_2$ would yield elements in $\tilde{D}$ that are below machine precision.
%
We now take a look at the behavior of the elements of the matrix $D$. Here, for the Gauss-QR method we used $\alpha_{\mathrm{Gauss-QR}} = \gamma$. One can see in \cref{fig:Doscillations} that for small values of $\eps$ the behavior is very similar for all three methods. However, for large $\eps$ the decay in~$D$ is particularly bad in our formulation.
 This criterion also neglects the matrix~$\tilde{R}$ and the effect of the polynomial vector 
 $H^{\gamma,\varepsilon,t}(x-x_0)$. In particular, we know from \cref{sec:mehler_nD} that the tail of the polynomial vector has some decay.

\begin{figure}[t]
 \centering
\begin{subfigure}[t]{0.42\textwidth} 
\includegraphics[scale=0.33]{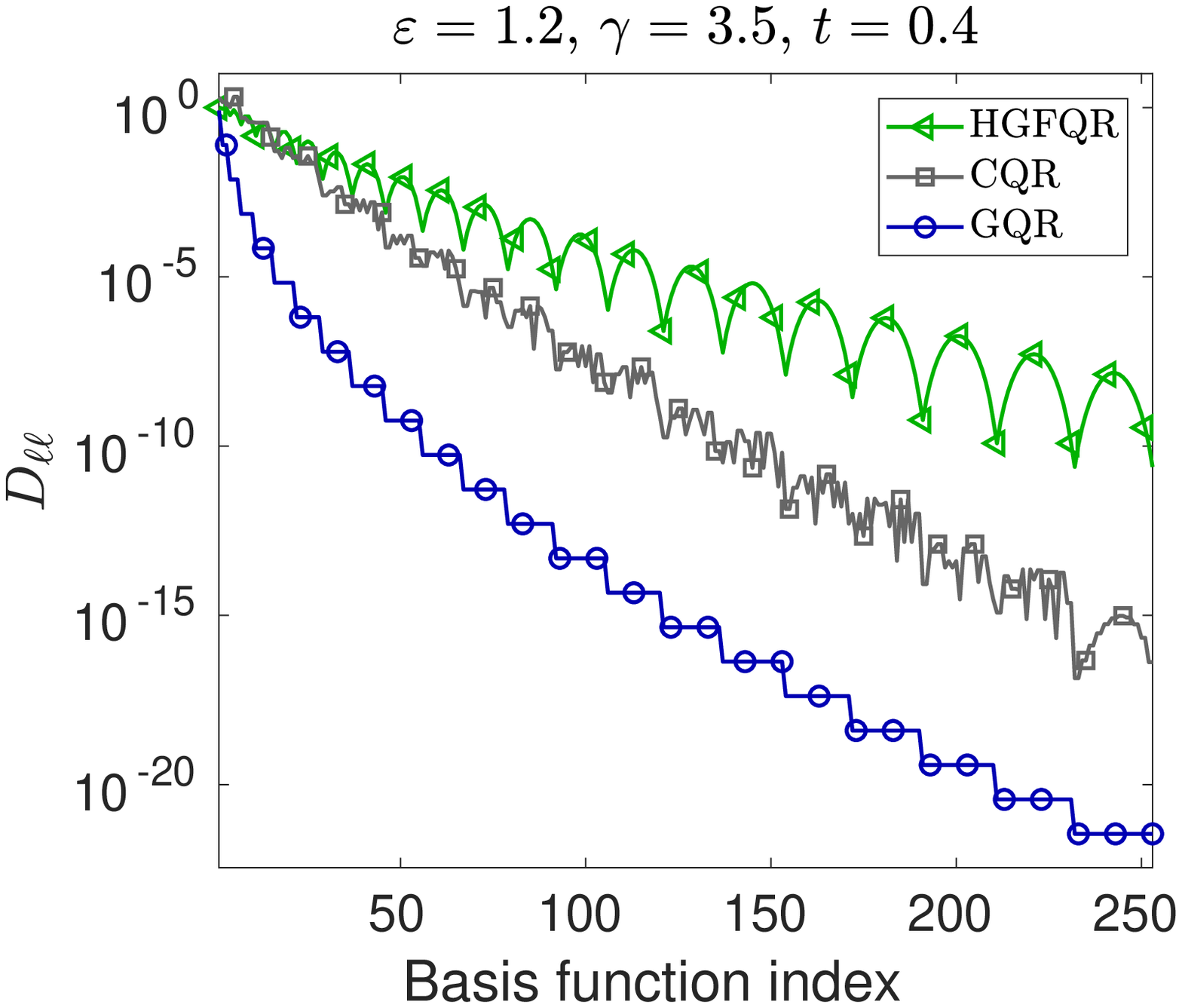} 
 \caption{Larger $\eps = 1.2$. }
\end{subfigure} 
\begin{subfigure}[t]{0.45\textwidth} 
\includegraphics[scale=0.33]{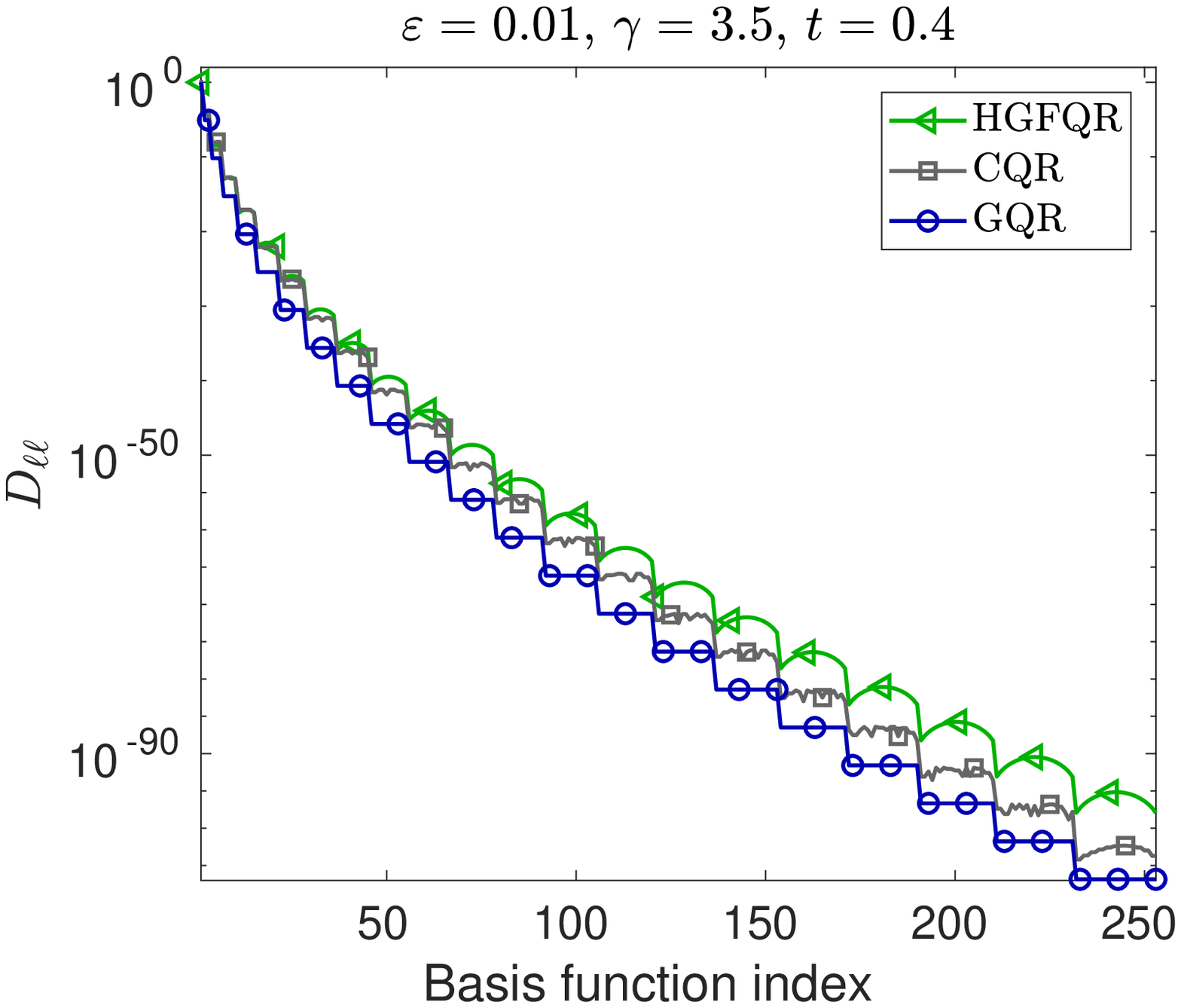}
 \caption{Smaller $\eps = 0.01$.}
\end{subfigure} 
\caption{For $\eps = 0.01$ the elements of $D$ behave very similarly for all cases. For a larger value of $\eps$, the elements of $D$ in HermiteGF method show larger magnitude of oscillations which leads to higher $\jmax$.} 
\label{fig:Doscillations}
\end{figure}

\subsection{New HermiteGF cut-off criterion}
In this section, we derive a more holistic criterion for the cut-off in the HermiteGF expansion. We use the Vandermonde formulation of the method since it provides an explicit expression for the elements of all matrices which simplifies the analytical study of the method. We cut the polynomial vector as
\[
H^{G,E,t}(\bx - \bx_0) = 
\begin{pmatrix}\hat{H}^{G, E, t}(\bx - \bx_0) &  H^{G, E, t}_\infty(\bx - \bx_0)\end{pmatrix}
\]
with $\hat{H}^{G, E, t} \in \mathbb{R}^{1\times M}$, where the number 
of basis functions used is larger than the number of collocation points, that is, $M\ge N$.  
Analogously we cut the $N\times \infty$ Vandermonde matrix $W_2$ and the infinite diagonal matrix $D_2$,
\[
W_2 = \begin{pmatrix} \hat{W}_2 & W_\infty\end{pmatrix} \quad\text{and}\quad
D_2 = \begin{pmatrix} \hat{D}_2 & 0\\ 0 & D_\infty\end{pmatrix}
\]
with $\hat{W}_2 \in \mathbb{R}^{N \times (M-N)}$ and $\hat{D}_2 \in \mathbb{R}^{(M-N) \times (M-N)}$. 
We note that the infinite matrix~$W_\infty$ contains the columns of the full Vandermonde matrix $W$ from column $M+1$ onward, while the infinite matrix $D_\infty$ contains the diagonal entries of the full diagonal matrix~$D$ starting from the entry $M+1$. 
We then rewrite the formulation of the method (see \cref{sec:vandermonde}) after the cut-off,
\[
 \hat{\Psi}(\bx)^T = \begin{pmatrix}
                   \mathrm{Id}_{N \times N} & D_1^{-1} W_1^{-1} \hat{W}_2 \hat{D}_2
                  \end{pmatrix} \hat{H}^{G, E, t}(\bx - \bx_0)^T.
\]
We want to make sure that 
\[
 \delta \Psi(\bx) = \Psi(\bx) - \hat{\Psi}(\bx)
\]
is small for all collocation points 
by choosing a sufficiently large but not too large truncation parameter $\jmax$. 
For estimating $\delta\Psi(\bx)$, we need the following lemma. 
\begin{lemma}[Exponential tail]
Consider $\by \in \mathbb{R}^d$ with $y_i \geq 0$ for all $i=1, \ldots, d$ and $\jmax\in\mathbb{N}$. Then,
 \begin{equation}
  \sum_{\vert\lbb\vert \geq \jmax} \frac{\by^{\lbb}}{\lbb!} \leq \sum_{\vert \lbb \vert = \jmax} \exp(\| \by \|_1) \frac{\by^{\lbb}}{\lbb!},
  \label{eq:taylorEstimation}
 \end{equation}
where $\| \by \|_1 = |y_1| + \ldots + |y_d|$.
\label{lemma:taylor}
\end{lemma}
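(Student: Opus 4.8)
The plan is to reduce the multivariate statement to a one–dimensional Taylor tail estimate applied coordinatewise, and then to reassemble. First I would observe that the left–hand side can be reorganized by the total degree: writing $\sum_{|\lbb|\ge\jmax} = \sum_{n\ge\jmax}\sum_{|\lbb|=n}$, it suffices to compare, for each fixed $n\ge\jmax$, the layer $\sum_{|\lbb|=n}\by^{\lbb}/\lbb!$ against $\exp(\|\by\|_1)\sum_{|\lbb|=\jmax}\by^{\lbb}/\lbb!$, but that layerwise comparison is not quite uniform, so instead I would work with the generating/multinomial identity $\sum_{|\lbb|=n}\by^{\lbb}/\lbb! = \|\by\|_1^{\,n}/n!$, valid because $\by$ has nonnegative entries. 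This collapses the multivariate sum to a genuinely one–dimensional statement about the scalar $s:=\|\by\|_1\ge 0$.

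With that reduction, the inequality to prove is
\begin{equation*}
\sum_{n\ge\jmax}\frac{s^{n}}{n!}\ \le\ \e^{s}\,\frac{s^{\jmax}}{\jmax!},
\end{equation*}
since on the right–hand side $\sum_{|\lbb|=\jmax}\by^{\lbb}/\lbb! = s^{\jmax}/\jmax!$ as well. I would prove this by factoring out the first term: $\sum_{n\ge\jmax}s^{n}/n! = \frac{s^{\jmax}}{\jmax!}\sum_{m\ge 0}\frac{\jmax!}{(\jmax+m)!}s^{m}$, and then bound each ratio $\jmax!/(\jmax+m)! \le 1/m!$ (indeed $(\jmax+m)!/\jmax! = (\jmax+1)\cdots(\jmax+m)\ge m!$ for $\jmax\ge 0$, where an empty product is $1$). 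Summing over $m$ gives $\sum_{m\ge 0}s^{m}/m! = \e^{s}$, which is exactly the claimed factor. The case $\jmax=0$ is the trivial identity $\e^{s}=\e^{s}$, and for $s=0$ both sides vanish when $\jmax\ge 1$, so those degenerate cases are covered.

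The only mildly delicate point is the interchange of the double sum $\sum_{n}\sum_{|\lbb|=n}$ with the rearrangement into $\sum_{|\lbb|}$, but since all terms $\by^{\lbb}/\lbb!$ are nonnegative, Tonelli's theorem for series makes every reordering legitimate and every sum (finite or infinite) well defined in $[0,\infty]$; convergence of $\e^{s}s^{\jmax}/\jmax!$ then retroactively shows the left–hand side is finite. So I do not expect a real obstacle here; the ``hard part'', if any, is simply recognizing that the nonnegativity hypothesis on $\by$ is precisely what licenses the multinomial collapse $\sum_{|\lbb|=n}\by^{\lbb}/\lbb! = \|\by\|_1^{n}/n!$, after which the statement is a one–line scalar estimate.
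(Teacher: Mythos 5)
Your proof is correct, but it takes a different route from the paper. The paper treats the left-hand side as the Taylor remainder of $f(\by)=\exp(y_1+\cdots+y_d)$ expanded at $\mathbf{a}=0$ and invokes the Lagrange form of the multivariate Taylor theorem: there is $\bxi\in[0,\by]$ with $\sum_{|\lbb|\ge\jmax}\by^{\lbb}/\lbb!=\sum_{|\lbb|=\jmax}\partial^{\lbb}f(\bxi)\,\by^{\lbb}/\lbb!$, and then bounds $\partial^{\lbb}f(\bxi)=\e^{\xi_1+\cdots+\xi_d}\le\e^{\|\by\|_1}$. You instead collapse each layer by the multinomial theorem, $\sum_{|\lbb|=n}\by^{\lbb}/\lbb!=\|\by\|_1^{\,n}/n!$, reducing everything to the scalar inequality $\sum_{n\ge\jmax}s^n/n!\le\e^{s}s^{\jmax}/\jmax!$, which you prove via the elementary bound $(\jmax+m)!\ge\jmax!\,m!$; the nonnegativity of $\by$ and Tonelli justify the rearrangements. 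Both arguments yield exactly the same bound (your right-hand side $\e^{s}s^{\jmax}/\jmax!$ is precisely the paper's right-hand side after the same multinomial collapse), so neither is sharper. What your route buys is self-containedness — no appeal to the mean-value form of the multivariate remainder, and the convergence/reordering issues are handled explicitly — and it makes transparent that the multivariate statement is really a one-dimensional fact about $\|\by\|_1$; the paper's route is shorter if one accepts Taylor's theorem with Lagrange remainder as a black box.
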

\begin{proof}
Consider the function
\[
 f(\by) = \exp(y_1+\cdots+y_d), \quad \by \geq 0.
\]
We note that the estimated sum coincides with the remainder of the Taylor series for the function $f$ at the point $\mathbf{a}=0$. Then, according to the multivariate Taylor's theorem, there exists $\bxi \in [0, \by]$ such that
\[
\sum_{\vert\lbb\vert \geq \jmax} \frac{\by^{\lbb}}{\lbb!} = 
\sum_{\vert \lbb \vert = \jmax} \partial^{\lbb}f(\bxi)\, \frac{\by^{\lbb}}{\lbb!}.
\]
Noting that $\partial^{\lbb} f(\bxi) = \exp(\mathbf{\bxi}) \leq \exp(\| \by \|_1)$ for $\by\ge0$,  we arrive at \eqref{eq:taylorEstimation}.
\end{proof}
Before proceeding to the estimation of the truncation error $\Vert \delta \Psi(\bx) \Vert_2$, we recall the definition of the vectors 
\[
\mathbf{d_\mathrm{vec}} = \mathrm{diag}(\mathrm{Diag})\quad\text{and}\quad 
\bv_k = (\mathrm{Id} + \mathrm{Diag}^{-1} \mathrm{Rem})\bDelta_k
\] 
for $k=1,\ldots,N$. They will contribute to the upper bound of the following estimate. In the isotropic case, 
they have the particularly simple form
\[
\mathbf{d_\mathrm{vec}} = \gamma^{-1}\epsilon^2(1,\ldots,1)\quad\text{and}\quad 
\bv_k = \bDelta_k\ \text{for all}\ k=1,\ldots,N.
\]

\begin{theorem}[Truncation estimate]
\label{thm:deltaPsiEstimation}
For $k=1,\ldots,N$ we set 
\[
\omega_k = \sum_{i=1}^N (W^{-1}_1)^2_{ki}>0\quad \text{and}\quad  
\by_k = \mathrm{Diag} \, \bv_k\in\mathbb{R}^d,  
\]
where $W_1$ is the upper left $N\times N$ block of the infinite Vandermonde matrix 
$W = (\bv_k^{\lbb})$. For $\jmax \in \mathbb{N}$ we denote
\[
\mathrm{const}_\jmax := 
\left(\sum_{k=1}^N \frac{\omega_k\, \mathbf{k}!\, (t/2)^{|\mathbf{k}|-(\jmax+1)}}{\mathbf{d}_\mathrm{vec}^{2\mathbf{k}}(\jmax+1)!}  \right)
\cdot \left(\sum_{i=1}^N \exp\!\left(\frac{2}{t}\Vert \by_i\Vert^2_2\right) \Vert \by_i \Vert_2^{2(\jmax+1)}\right).
\]
Then, the truncation error $\delta\Psi$ satisfies for all $\bx \in \mathbb{R}^d$,
  \begin{align}
  \Vert \delta \Psi (\bx) \Vert_2^2 &\ \leq\  
  \mathrm{const}_\jmax
  \cdot \left(\haggfe_{\mathrm{lim}}(\bx-\bx_0) - \sum_{\vert \lbb \vert \leq \jmax}\haggf(\bx-\bx_0)^2\right), \label{eq:deltaPsiEstimation}
 \end{align}
 where 
 $\haggfe_{\mathrm{lim}}(\bx-\bx_0)$ can be evaluated via \eqref{eq:HinftyNorm}.
\end{theorem}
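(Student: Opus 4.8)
The plan is to work in the Vandermonde formulation of \cref{sec:vandermonde}, where the cut-off with $M=\binom{\jmax+d}{d}$ basis functions is exactly the restriction to multi-indices $|\lbb|\le\jmax$. Since $M\ge N$, the first $N$ entries of $\Psi(\bx)$ coincide with those of $\hat\Psi(\bx)$, so the whole truncation error is carried by the tail block; writing $\mathbf{z}:=\bx-\bx_0$, one has
\[
 \delta\Psi(\bx)^{T}\;=\;D_1^{-1}W_1^{-1}\,W_\infty\,D_\infty\,\haggfe_\infty(\mathbf{z})^{T},
\]
where $W_\infty$ and $D_\infty$ collect the columns and diagonal entries with $|\lbb|\ge\jmax+1$. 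The first step is to evaluate the $k$-th entry of $W_\infty D_\infty\haggfe_\infty(\mathbf{z})^{T}$ in closed form: inserting $W_{k\lbb}=\bv_k^{\lbb}$, $D_{\lbb\lbb}$ from \eqref{eq:CDanisotropic} and the definition of $\haggf$, the factors $\sqrt{2^{|\lbb|}}$, $t^{|\lbb|/2}$ and one copy of $\sqrt{\lbb!}$ cancel, and with $(\mathbf{d}_\mathrm{vec})_i(\bv_k)_i=(\mathrm{Diag}\,\bv_k)_i=(\by_k)_i$ the entry becomes
\[
 g_k(\mathbf{z})\;:=\;\Big(\sum_{|\lbb|\ge\jmax+1}\frac{\by_k^{\lbb}}{\lbb!}\,h_{\lbb}(G^{T}\mathbf{z})\Big)\,\e^{-\mathbf{z}^{T}E^{T}E\mathbf{z}}.
\]

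Next I would bound $g_k(\mathbf{z})^2$ by Cauchy--Schwarz over $\lbb$, using the split
\[
 \frac{\by_k^{\lbb}}{\lbb!}\,h_{\lbb}(G^{T}\mathbf{z})\;=\;\Big(\tfrac{t^{|\lbb|/2}}{\sqrt{2^{|\lbb|}\lbb!}}\,h_{\lbb}(G^{T}\mathbf{z})\Big)\cdot\Big(\tfrac{(2/t)^{|\lbb|/2}}{\sqrt{\lbb!}}\,\by_k^{\lbb}\Big).
\]
The square of the first factor, summed against $\e^{-2\mathbf{z}^{T}E^{T}E\mathbf{z}}$, reproduces exactly $\sum_{|\lbb|\ge\jmax+1}\haggf(\mathbf{z})^2=\haggfe_{\mathrm{lim}}(\mathbf{z})-\sum_{|\lbb|\le\jmax}\haggf(\mathbf{z})^2$, the Mehler-type tail of \eqref{eq:HinftyNorm}. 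The square of the second factor sums to $\sum_{|\lbb|\ge\jmax+1}\big((2/t)\by_k^{2}\big)^{\lbb}/\lbb!$ (component-wise square of $\by_k$); to this I apply \cref{lemma:taylor} with the nonnegative vector $(2/t)\by_k^{2}$ and then collapse the homogeneous degree-$(\jmax+1)$ sum with the multinomial identity $\sum_{|\lbb|=n}\mathbf{w}^{\lbb}/\lbb!=\|\mathbf{w}\|_1^{n}/n!$. Combining the two factors gives, for each $k$,
\[
 g_k(\mathbf{z})^2\;\le\;\Big(\haggfe_{\mathrm{lim}}(\mathbf{z})-\sum_{|\lbb|\le\jmax}\haggf(\mathbf{z})^2\Big)\,\exp\!\Big(\tfrac{2}{t}\|\by_k\|_2^2\Big)\,\frac{(2/t)^{\jmax+1}\|\by_k\|_2^{2(\jmax+1)}}{(\jmax+1)!}.
\]

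Finally I would push this through the $N\times N$ matrix $A:=D_1^{-1}W_1^{-1}$ via $\|\delta\Psi(\mathbf{z})\|_2^2=\|A\,\mathbf{g}(\mathbf{z})\|_2^2\le\|A\|_F^2\,\|\mathbf{g}(\mathbf{z})\|_2^2$, using that $D_1$ is diagonal with $D_{kk}^{-2}=\mathbf{k}!\,(t/2)^{|\mathbf{k}|}/\mathbf{d}_\mathrm{vec}^{2\mathbf{k}}$ (with $\mathbf{k}$ the multi-index of the $k$-th stabilizing basis function), so that $\|A\|_F^2=\sum_{k=1}^N\omega_k\,\mathbf{k}!\,(t/2)^{|\mathbf{k}|}/\mathbf{d}_\mathrm{vec}^{2\mathbf{k}}$. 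Summing the per-$k$ bound over $k=1,\dots,N$, the tail factor (independent of $k$) comes out, the constant $(2/t)^{\jmax+1}/(\jmax+1)!$ is absorbed into the first sum so as to produce the exponent $|\mathbf{k}|-(\jmax+1)$, and what remains is precisely $\mathrm{const}_\jmax$ times the tail, i.e.\ \eqref{eq:deltaPsiEstimation}. The main obstacle I expect is the bookkeeping: correctly isolating the tail block of $\Psi-\hat\Psi$, aligning $M=\binom{\jmax+d}{d}$ with the degree cut $|\lbb|\le\jmax$, and---most delicately---choosing the Cauchy--Schwarz weight in $\lbb$ so that one factor is \emph{exactly} the squared HermiteGF sum closed by Mehler's formula while the other is a pure Taylor tail amenable to \cref{lemma:taylor}; a different split would leave a series with no closed form.
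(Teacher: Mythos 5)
Your proposal is correct and yields exactly the bound \eqref{eq:deltaPsiEstimation} with the same constant $\mathrm{const}_\jmax$; the ingredients coincide with the paper's (isolating the tail block so that $\delta\Psi(\bx)^T = D_1^{-1}W_1^{-1}W_\infty D_\infty H^{G,E,t}_\infty(\bx-\bx_0)^T$, then \cref{lemma:taylor}, the multinomial identity, and the Mehler evaluation \eqref{eq:HinftyNorm} of the Hermite tail). The genuine difference is where the two Cauchy--Schwarz-type steps are placed. The paper first uses Frobenius--Euclidean norm compatibility to split off $\Vert H^{G,E,t}_\infty(\bx-\bx_0)\Vert_2^2$ and then applies Cauchy--Schwarz over the collocation index $i$ inside $\Vert D_1^{-1}W_1^{-1}W_\infty D_\infty\Vert_F^2$, producing the factor $\omega_k\sum_i \bv_i^{2\lbb}$ entrywise; you instead peel off $\Vert D_1^{-1}W_1^{-1}\Vert_F^2=\sum_k \omega_k\,\mathbf{k}!\,(t/2)^{|\mathbf{k}|}/\mathbf{d}_\mathrm{vec}^{2\mathbf{k}}$ first and apply Cauchy--Schwarz over the multi-index $\lbb$ inside the closed-form tail entries $g_k$, with the weight chosen so that one factor is exactly the squared HermiteGF tail and the other is a pure Taylor tail of $(2/t)\by_k^2$. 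Both orderings merge at the identical intermediate expression $\bigl(\sum_k \omega_k D_{kk}^{-2}\bigr)\bigl(\sum_i\sum_{|\lbb|>\jmax}((2/t)\by_i^2)^{\lbb}/\lbb!\bigr)\Vert H^{G,E,t}_\infty(\bx-\bx_0)\Vert_2^2$ with $D_{kk}$ from \eqref{eq:CDanisotropic}, so the final estimate is literally the same. Your variant has a small expository advantage: $g_k$ is recognizably the generating-function tail of the $k$-th Gaussian (up to the exponential prefactor), which makes transparent why the Hermite factor closes via Mehler's formula; the paper's ordering instead mirrors the Hadamard-product form $\tilde D_\infty .* (W_1^{-1}W_\infty)$ actually formed in the algorithm. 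One wording slip: it is not that ``the first $N$ entries of $\Psi$ coincide with those of $\hat\Psi$'' (both are $N$-vectors), but that the contributions of the first $M$ HermiteGF functions coincide, so only the $|\lbb|>\jmax$ block survives in $\delta\Psi$ --- your displayed formula for $\delta\Psi^T$ is nevertheless the correct one.
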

\begin{proof}
We start by observing that
\begin{equation*}
 D_1^{-1}W_1^{-1}W_2 D_2 = D_1^{-1}W_1^{-1} \begin{pmatrix}
                                              \hat{W}_2\hat{D}_2 & W_{\infty} D_{\infty}
                                             \end{pmatrix}.
\end{equation*}
Hence, it holds that
\begin{equation*}
 \delta \Psi(\bx)^T = D_1^{-1}W_1^{-1}W_{\infty} D_{\infty} H^{G, E, t}_\infty(\bx - \bx_0)^T,
\end{equation*}
and due to compatibility of Frobenius norm and the 2-norm 
\[
 \Vert \delta \Psi(\bx) \Vert_2^2 \leq \left \Vert D_1^{-1}W_1^{-1}W_{\infty} D_{\infty} \right \Vert_F^2 \cdot \left\Vert H^{G, E, t}_\infty (\bx - \bx_0) \right \Vert_2^2.
\]
We further consider the two norms on the right-hand side separately. We first take a look at the Frobenius norm. Recall that in the RBF-QR method we evaluate the effect of the impact of $D_1^{-1} \ldots D_2$ analytically. We can do the same here:
\[
 D_1^{-1}W_1^{-1}W_{\infty} D_{\infty} = \tilde{D}_\infty \, .* (W_1^{-1}W_{\infty}),
\]
where $.*$ denotes the Hadamard product and 
$\tilde{D}_\infty$ is constructed analogously to \eqref{eq:DtildeAnisotropic}.
We write the Frobenius norm as
\[
\Vert D_1^{-1}W_1^{-1}W_{\infty} D_{\infty} \Vert_F^2 = \sum_{k=1}^N\sum_{\ell> M} \left(\tilde{D}_{k\ell}^2\cdot \left(\sum_{i=1}^N(W_1^{-1})_{ki}W_{i\ell}\right)^2\right)
\]
and estimate with the help of the Cauchy-Schwarz inequality,
\begin{align*}
\tilde{D}_{k\ell}^2 \left(\sum_{i=1}^N(W_1^{-1})_{ki}W_{i\ell}\right)^2
&\le \  
\frac{\omega_k}{D_{kk}^2} \ D_{\ell\ell}^2 \sum_{i=1}^N \bv_i^{2\lbb},
\end{align*}
where $\lbb\in\mathbb{N}^d$ is the $\ell$-th multi-index corresponding to our basis enumeration.
We used the explicit expression of 
$D_{\ell\ell}$ as defined in \eqref{eq:CDanisotropic} and write
\begin{align*}
\frac{\omega_k}{D_{kk}^2} \ D_{\ell\ell}^2 \sum_{i=1}^N \bv_i^{2\lbb}
&= \  
\frac{\omega_k\, \mathbf{k}!\, t^{|\mathbf{k}|}}{\mathbf{d}_\mathrm{vec}^{2\mathbf{k}}\, 2^{|\mathbf{k}|}} \ 
\frac{2^{|\lbb|}}{\lbb!\,t^{|\lbb|}} \sum_{i=1}^N  (\mathrm{Diag}\,\bv_i)^{2\lbb}.
\end{align*}
We denote $\tilde{\by}_i = \begin{pmatrix}\frac{2}{t}(\by_i)_1^2 & \ldots & \frac{2}{t}(\by_i)_d^2\end{pmatrix}$. Then, by \cref{lemma:taylor} we get
\begin{align*}
\Vert D_1^{-1}W_1^{-1}W_{\infty} D_{\infty} \Vert_F^2 
&\leq\ \sum_{k=1}^N  \frac{\omega_k\, \mathbf{k}!\, t^{|\mathbf{k}|}}{\mathbf{d}_\mathrm{vec}^{2\mathbf{k}}\, 2^{|\mathbf{k}|}}  \ \sum_{i=1}^N\sum_{\vert\lbb\vert> \jmax} \frac{\tilde{\by}_i^{\lbb}}{\lbb!}\\
 &\leq \ \sum_{k=1}^N \frac{\omega_k\, \mathbf{k}!\, t^{|\mathbf{k}|}}{\mathbf{d}_\mathrm{vec}^{2\mathbf{k}}\, 2^{|\mathbf{k}|}} \  \sum_{i=1}^N \sum_{\vert \lbb \vert = \jmax+1}\exp\left(\Vert \tilde{\by}_i\Vert_1\right) \frac{\tilde{\by}_i^{\lbb}}{\lbb!}.
 \end{align*}
Using the multinomial theorem and the fact that
 \begin{equation*}
  \Vert \haggfe_\infty(\bx - \bx_0) \Vert_2^2 = \haggfe_{\mathrm{lim}}(\bx - \bx_0) - \sum_{\vert \lbb \vert \leq \jmax} \haggf(\bx - \bx_0)^2.
  \label{eq:tailH2}
 \end{equation*}
 \noindent
we arrive to estimate \eqref{eq:deltaPsiEstimation}.
\end{proof}
  The denominator $\mathbf{d}_\mathrm{vec}^{2\mathbf{k}}$ in  expression \eqref{eq:deltaPsiEstimation} can take extremely small values that can lead to underflow. To avoid this, it can be combined with $
  \Vert \by_i \Vert_2^{2(\jmax + 1)}$. For this, we define the $d$-dimensional index $\mathbf{j}_d = \left(\frac{1}{d},\, ...,\, \frac{1}{d}\right)$ and use the following transformation
\[
 \Vert \by_i \Vert_2 = \mathbf{d}_\mathrm{vec}^{\mathbf{j}_d} \Vert \by_i./( \mathbf{d}_\mathrm{vec}^{\mathbf{j}_d}) \Vert_2 =:   \mathbf{d}_\mathrm{vec}^{\mathbf{j}_d} \Vert\by_i^D \Vert_2,
\]
  where $./$ denotes component-wise division. Pulling out $\mathrm{Diag}^{\mathbf{j}_d}$ the constant of 
  the estimate \eqref{eq:deltaPsiEstimation} can be rewritten as
  \begin{align*}
  \mathrm{const}_{\jmax} = \left(\sum_{k=1}^N \frac{\omega_k \mathbf{k}! \left(\frac{2}{t} \mathbf{d}_\mathrm{vec}^2\right)^{-\mathbf{k} + (\jmax+1)\mathbf{j}_d}}{(\jmax+1)!} \right)\left(\sum_{i=1}^N \exp\!\left(\frac{2}{t}\Vert \by_i\Vert^2_2\right) \Vert \by_i^D \Vert_2^{2(\jmax+1)}\right).
 \end{align*}
\noindent
Note, that in the isotropic case, one can simplify $ \mathbf{d}_\mathrm{vec}^{-2\mathbf{k} + 2(\jmax+1)\mathbf{j}_d} =  (\eps^4/\gamma^2)^{\jmax + 1 - \vert \mathbf{k} \vert}$. We are now ready to formulate our cut-off criterion.
\begin{criterion}
We choose $\jmax$ for the HermiteGF-QR method such that
\begin{equation*}
 \max_{k=1\ldots N}\frac{\Vert \delta \Psi(\bx_k) \Vert_2}{\Vert \hat{\Psi}(\bx_k) \Vert_2} \leq \emph{\texttt{TOL}},
\end{equation*}
where $\{\bx_k\}_{k=1}^N$ are the collocation points.
\end{criterion}
Since we are looking at the relative error, the tolerance \texttt{TOL} need not
be machine precision. The crucial difference to the state-of-the-art criterion \eqref{eq:jmaxOld} is that now the \texttt{TOL} directly controls the accuracy of the stable basis $\Psi$. Depending on the desired accuracy, the tolerance can be adjusted for the specific problem. 
\subsection{Automatic detection of $t$}
One can use the criterion above also for determining the value of the parameter $t$. We scan the whole spectrum of the values of~$t$ and detect the one that yields the minimum amount of basis functions
\[
 \arg \min_{t \in (0,1)} \jmax(t) = t_{\mathrm{auto}}.
\]
Note that very small values of $t$ can cause cancellations and should be excluded (see~\cref{{sec:numerics_iso_N}}). Even though this introduces additional computational cost in the determination of the suitable expansion, it could be profitable for the cases where the basis is used multiple times after having fixed the number $\jmax$ as e.g.\ in a time loop. 

\section{Numerical results}
\label{sec:numericalResults}
We have implemented the HermiteGF interpolation in \texttt{MATLAB}. The code is available for download at \url{https://gitlab.mpcdf.mpg.de/clapp/hermiteGF}. We compare the isotropic HermiteGF-based algorithm with the existing stabilization methods, the Chebyshev-QR method\footnote{Code downloaded from \url{http://www.it.uu.se/research/scientific_computing/software/rbf_qr} on September 10, 2018.} and the Gauss-QR method\footnote{Code downloaded from \url{http://math.iit.edu/~mccomic/gaussqr/} on September 5, 2018.}. We evaluate the influence of different parameters, such as $\eps$, $\gamma$, number of collocation points $N$ on the quality of the interpolation. For the Gauss-QR method, we take the free parameter $\alpha$ to be equal to our value of $\gamma$, i.e., $\alpha_{\mathrm{Gauss-QR}} = \gamma$.
Since there are no stabilization methods available for fully anisotropic interpolation, we test the anisotropic HermiteGF-QR only against the direct algorithm to verify the correctness. To determine the cut-off degree in the HermiteGF method, we use
the HermiteGF cut-off criterion with $\text{\texttt{TOL}} = 10^{-6}$, unless stated otherwise. For this tolerance, the HermiteGF-QR method provides results that match the results from Chebyshev-QR and Gauss-QR. The parameter $t$ is detected automatically. In all tests we evaluate the interpolant at a set of evaluation points $\{\mathbf{z}_k\}_{k=1}^{N_{\rm ev}}$ and look at the average error of the form \cite[$\mathsection$ 5.1, Expr. (5.2)]{fasshauer2012stable}:
\begin{equation*}
 \text{error} = \frac{1}{N_{\rm ev}} \sqrt{\sum_{k=1}^{N_{\rm ev}}
\left(\frac{f(\mathbf{z}_k) - s(\mathbf{z}_k)}{f(\mathbf{z}_k)}\right)^2}.
\end{equation*}

\subsection{2D isotropic interpolation}
\label{sec:hyperIsotropic}
In this section, we take a look at the two-dimensional interpolation with HermiteGF-QR. We take multiples of the identity for both $E$ and $G$. We look at a hyperbolic domain (see \cref{fig:hyperbolicDomain}) defined by the inequality
\begin{equation}
 0.04 \leq (x+1.2)^2- 4y^2 \leq 1 
 \label{eq:hyperbolicDomain}
\end{equation}
with a boundary condition $x^2 + y^2 \leq 1$. The hyperbola of type \eqref{eq:hyperbolicDomain} can then be parameterized as 
\[
(x, y) = r(t) = ( 0.5 c\cosh(t) - 1.2, c\sinh(t)),\, t \in \mathbb{R}, \, c \in [0.2, 1].
\]
\noindent
We run the tests for the following function ($f_4$ from \cite[$\mathsection$ 6]{fornberg2011stable}):
\begin{equation*}
 f_{\mathrm{h}}(x,y) = \sin(x^2 + 2y^2) - \sin(2x^2 + (y - 0.5)^2).
\end{equation*}

\begin{wrapfigure}{l}{0.37\textwidth}
\vspace{-0.5cm}
\hspace{-0.2cm} \includegraphics[scale = 0.37]{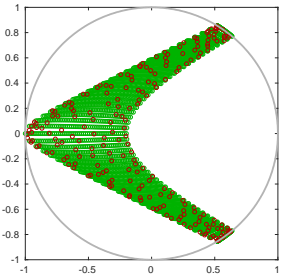}
\caption{Hyperbolic domain. Evaluation grid (green) and $N=210$ clustered Halton node points (red).}
\label{fig:hyperbolicDomain}
\vspace{-0.7cm}
\end{wrapfigure}
\noindent
We investigate the behavior of the performance of the interpolation with respect to the parameters $\gamma$, $\eps$, and number of functions $N$. We use $\gamma = 3.5$, $\epsilon = 0.05$ and optimize $t$ from the set $\mathrm{tvec} = \texttt{linspace(0.1, 0.99, 10)}$, unless stated otherwise.

We sample the collocation points from Halton points that are clustered to the boundary and mapped to the hyperbolic domain.
For all tests, we use $N_{\rm ev} = 53^2$ evaluation points that are sampled similarly to the collocation points, but based on a uniform grid and without clustering. The nodes distribution is depicted in \cref{fig:hyperbolicDomain}. This domain and sampling strategy choice was inspired by \cite[$\mathsection$ 6.1.2] {fornberg2011stable}.

\subsubsection{The number of nodes $N$}\label{sec:numerics_iso_N}

\begin{figure}[t]
 \centering
\begin{subfigure}[t]{0.42\textwidth}
 \includegraphics[scale=0.325]{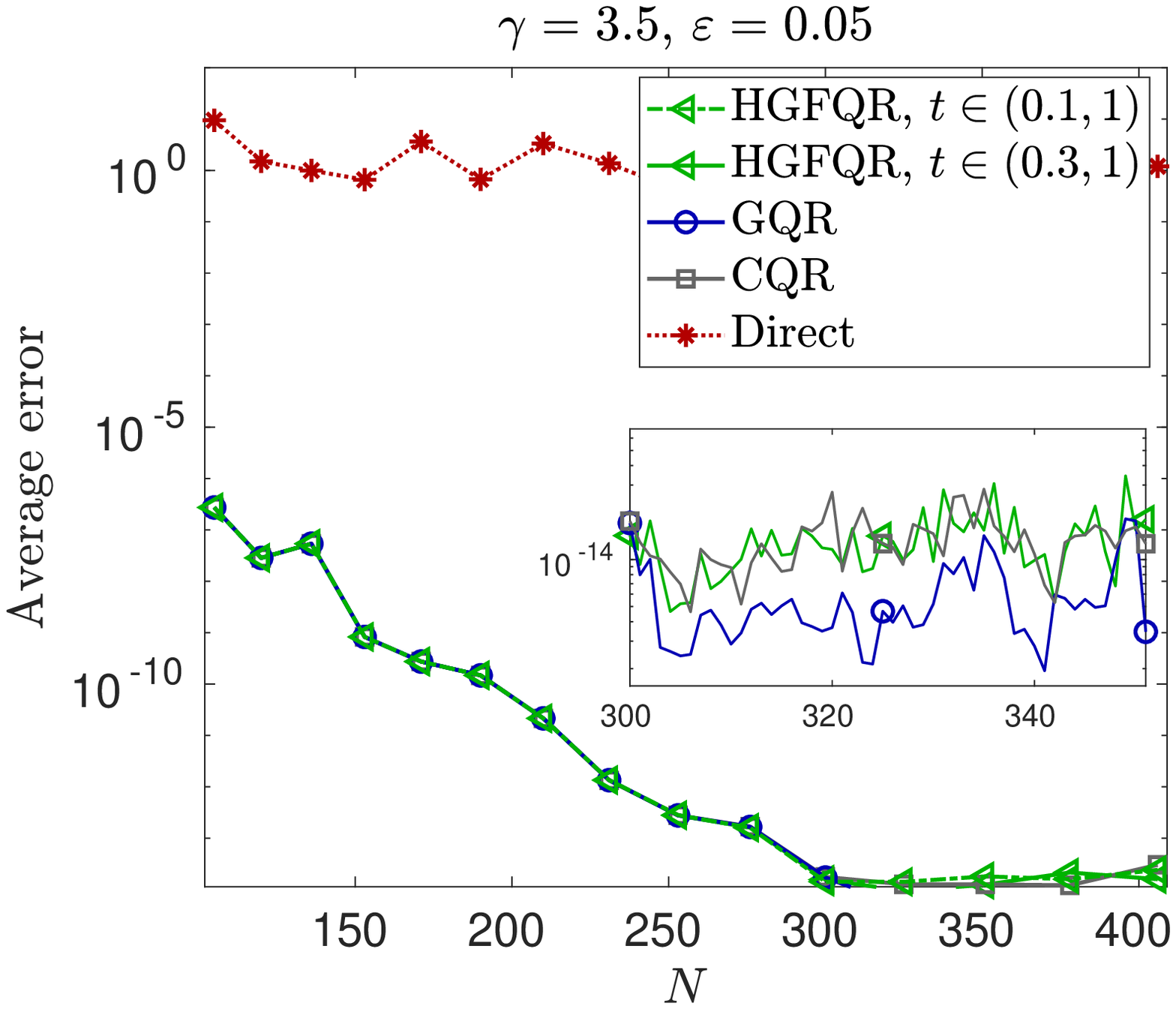}
\caption{Average error.}
\end{subfigure}
\begin{subfigure}[t]{0.45\textwidth}
 \includegraphics[scale=0.325]{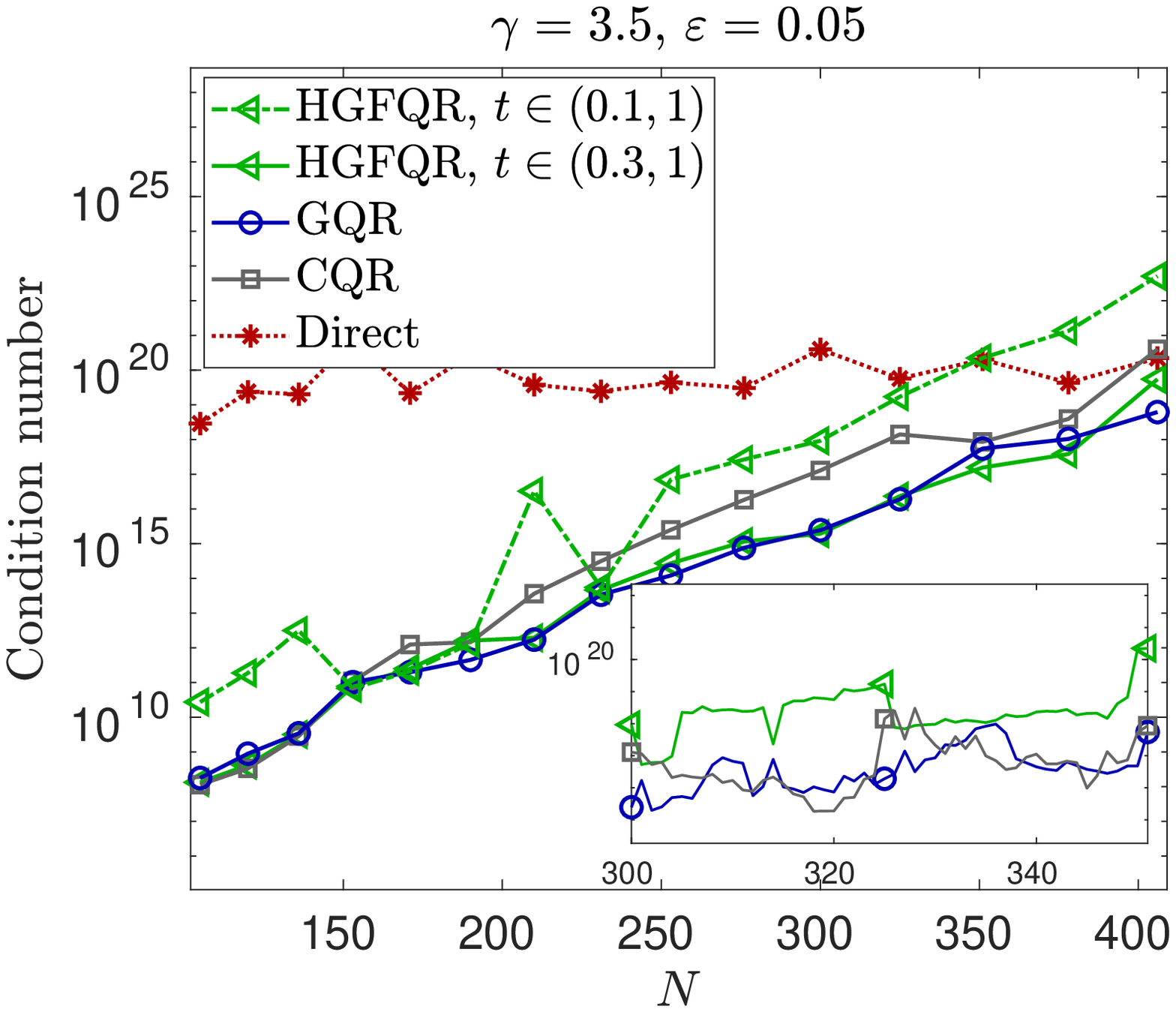}
\caption{Condition number.}
\end{subfigure}
\caption{For the two-dimensional isotropic test case, the interpolation quality is the same for all three stabilization methods. The conditioning is slightly worse for small values of $t$. There is small noise for all methods when the number of radial basis functions $N$ does not correspond to a number of all polynomials of a degree $\leq P$ for a certain $P$.}
\label{fig:varyN}
\end{figure}

Let us first look at the behavior of the method for different numbers of nodes, $N$. We take the values of $N\in[100, 410]$ of the form $\begin{pmatrix}
         P+2 \\
         2                                                                                                                                                                                        \end{pmatrix}$ for some integer $P$, such that there are no same powers of $\eps$ present in both $D_1$ and $D_2$. In \cref{fig:varyN}, we see that the error consistently decays for all the tested methods. Choosing the truncation parameter $t$ in the interval $t\in (0.1, 1)$, the conditioning of the HermiteGF-QR method is slightly worse than for the other methods, since big powers of smaller values of $t$ yield cancellations. Indeed, limiting the range of $t$ to $t\in (0.3, 1)$ brings the conditioning to the level of the other methods. 
         Using all integers in the interval $[100, 410]$ also provided consistent results for all three methods, however, the picture gets noisy. A snippet of that behavior can be seen in the zoomed regions in \cref{fig:varyN}. This can be related to the fact that for the values of $N$ of the form above the limit of the RBF interpolant in the flat limit $\eps \rightarrow 0$ is a \textit{unique} polynomial of degree $P$ \cite[$\mathsection 4$ Theorem 4.1]{larsson2005theoretical} whereas for other values the uniqueness is not guaranteed.
\subsubsection{Sensitivity to $\gamma$}
\label{sec:numericalGamma}
Let us take a look at the influence of the parameter $\gamma$ on the interpolation quality. We see in \cref{fig:gammaN} that for small $N$ the interpolation quality is not sensitive to the value of $\gamma$. However, for larger $N$ the parameter $\gamma$ has to be chosen with care. One can see in the \cref{fig:gammaN_conditioning} that the conditioning is worse for small $\gamma$.
However, one should be careful while increasing $\gamma$ since it also increases the evaluation domain of the 
Hermite polynomials, which take very large values on large domains which can lead to overflow. 
This effect becomes more pronounced as the degree of the Hermite polynomials increases. The optimal balance depends on the particular function and the number of basis functions.
\begin{figure}[h]
 \centering
\begin{subfigure}[t]{0.42\textwidth}
\hspace{-0.2cm}
 \includegraphics[scale=0.325]{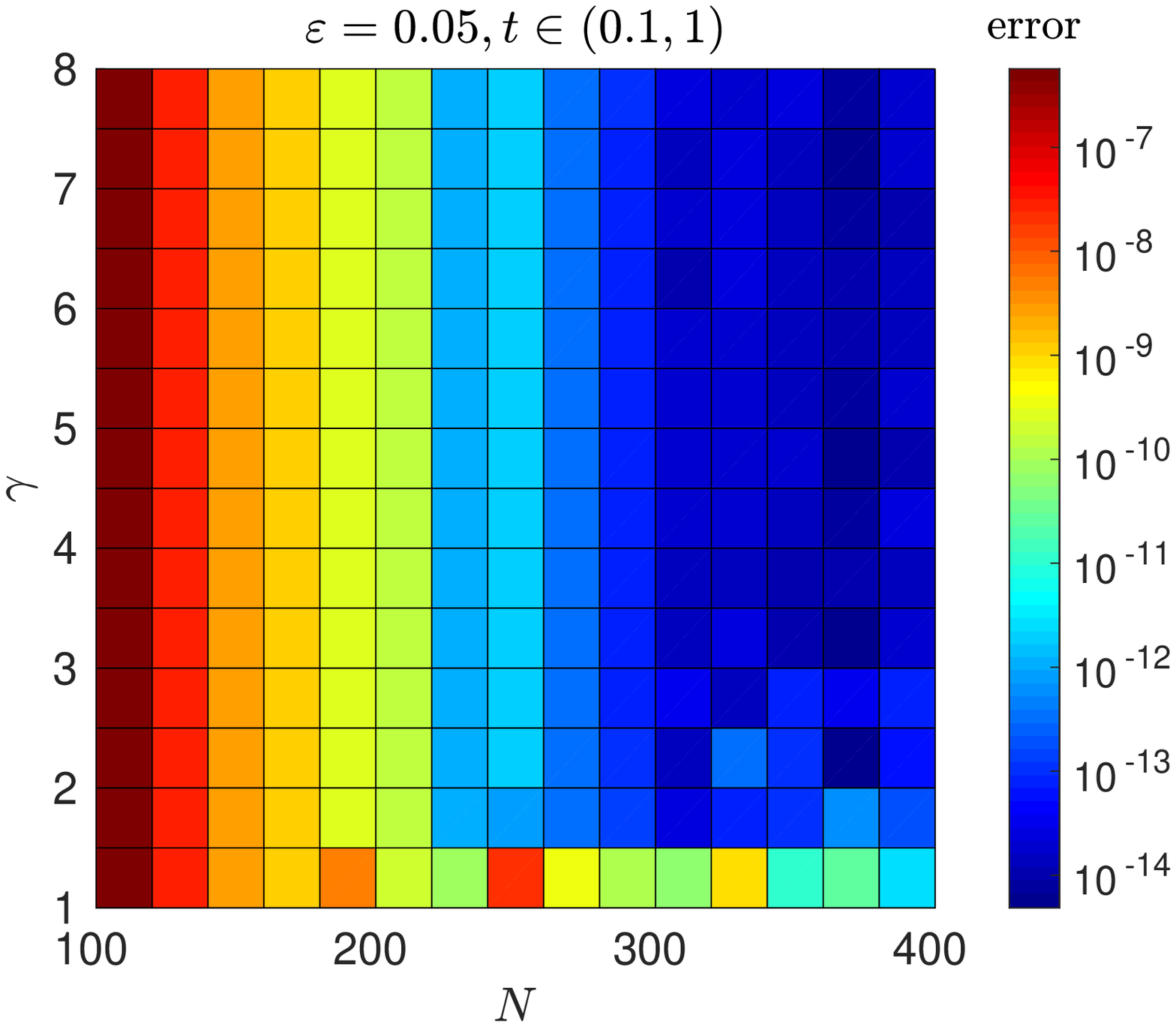}
\caption{Error.}
\end{subfigure}
\begin{subfigure}[t]{0.46\textwidth}
 \includegraphics[scale=0.325]{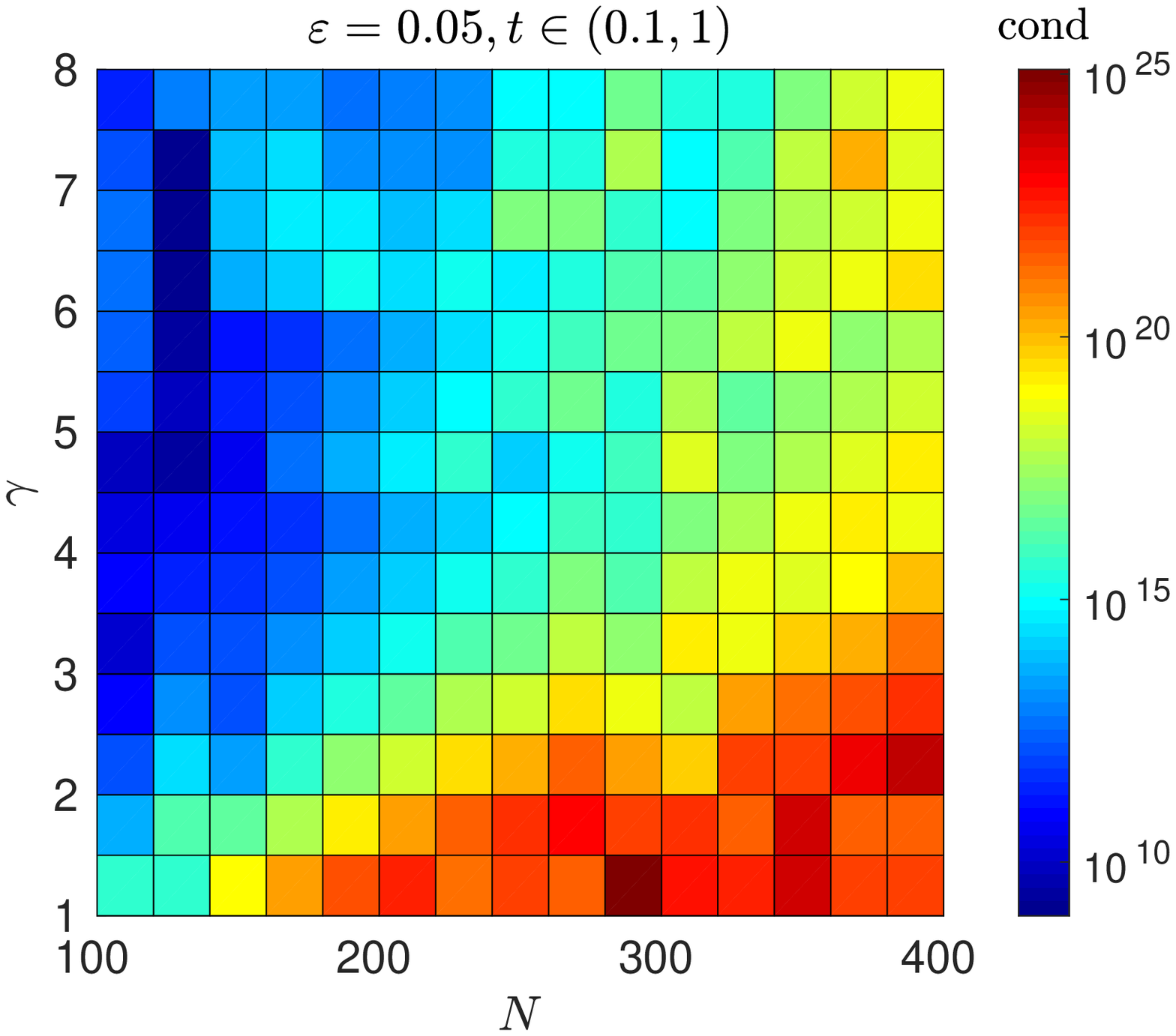}
\caption{Conditioning.}
\label{fig:gammaN_conditioning}
\end{subfigure}
\caption{Average error and condition number for the isotropic two-dimensional test case. 
For small and moderate $N$ the interpolation quality is not sensitive to the value of $\gamma$, whereas for the larger $N$ one should carefully choose the value of $\gamma$.}
\label{fig:gammaN}
\end{figure}
\subsubsection{Cut-off degree $\jmax$}
Next, we look at the influence of the value of \texttt{TOL} on the quality of the interpolation. We compare the error only to the Gauss-QR method since the difference between the Chebyshev-QR and Gauss-QR results is down to machine precision. One can see in  \cref{fig:jmaxVals} that for \texttt{TOL} $=10^{-6}$ the difference HermiteGF-QR and Gauss-QR is also down to machine precision. If we relax the tolerance to $10^{-2}$, the error is still small compared to the magnitude of the interpolation error, while having smaller $\jmax$, which yields an improved computational efficiency. Also, the figure shows a general trend that the expansion decays rather fast for small $\eps$ while an increasing number of basis functions is needed for $\eps$ close to 1.


\subsection{2D anisotropic interpolation}
\begin{figure}[t]
 \begin{subfigure}[t]{0.43\textwidth} 
\includegraphics[scale=0.33]{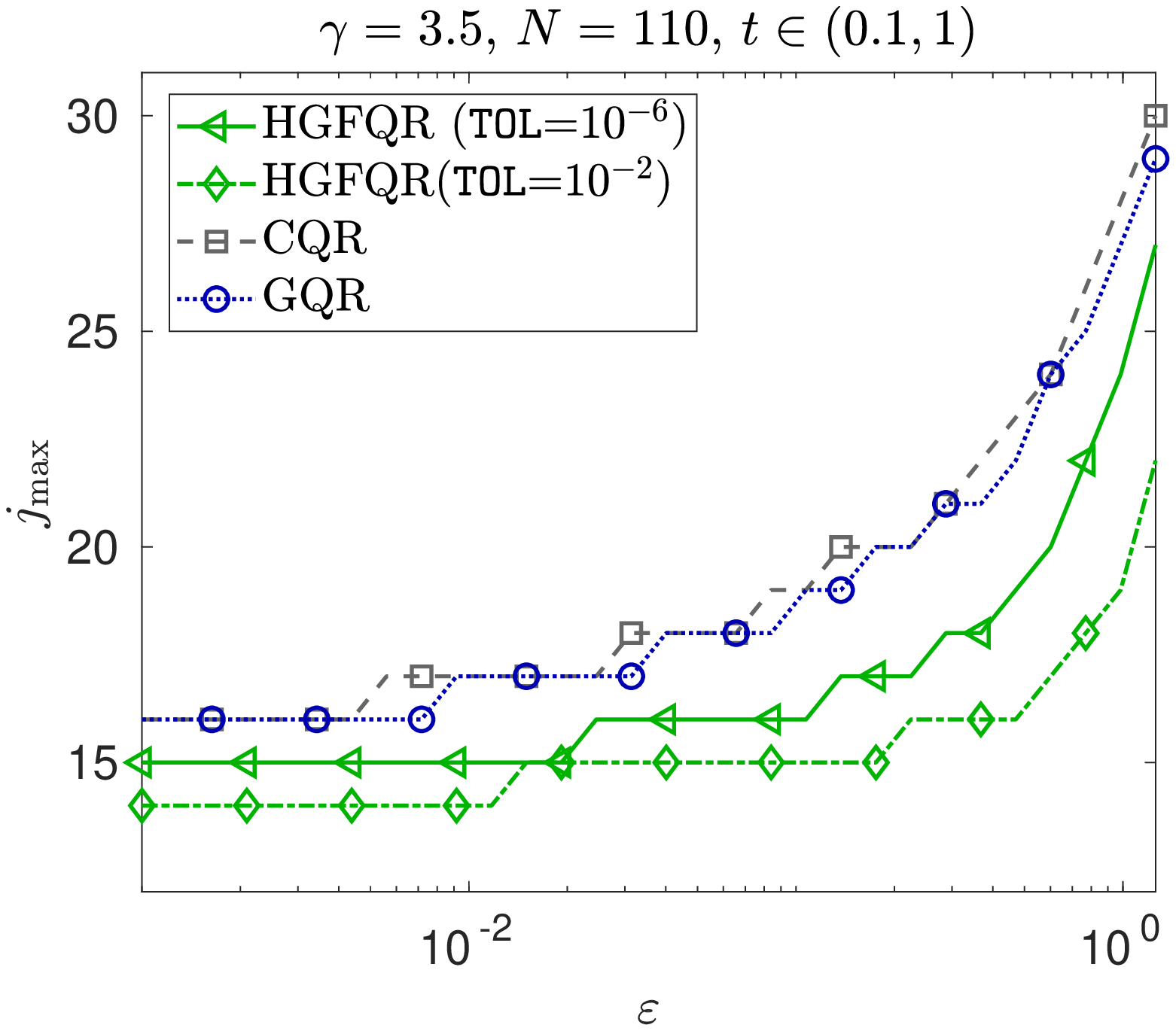} 
 \caption{Dependence of $\jmax$ on $\eps$.}
\end{subfigure} 
\begin{subfigure}[t]{0.465\textwidth} 
\includegraphics[scale=0.33]{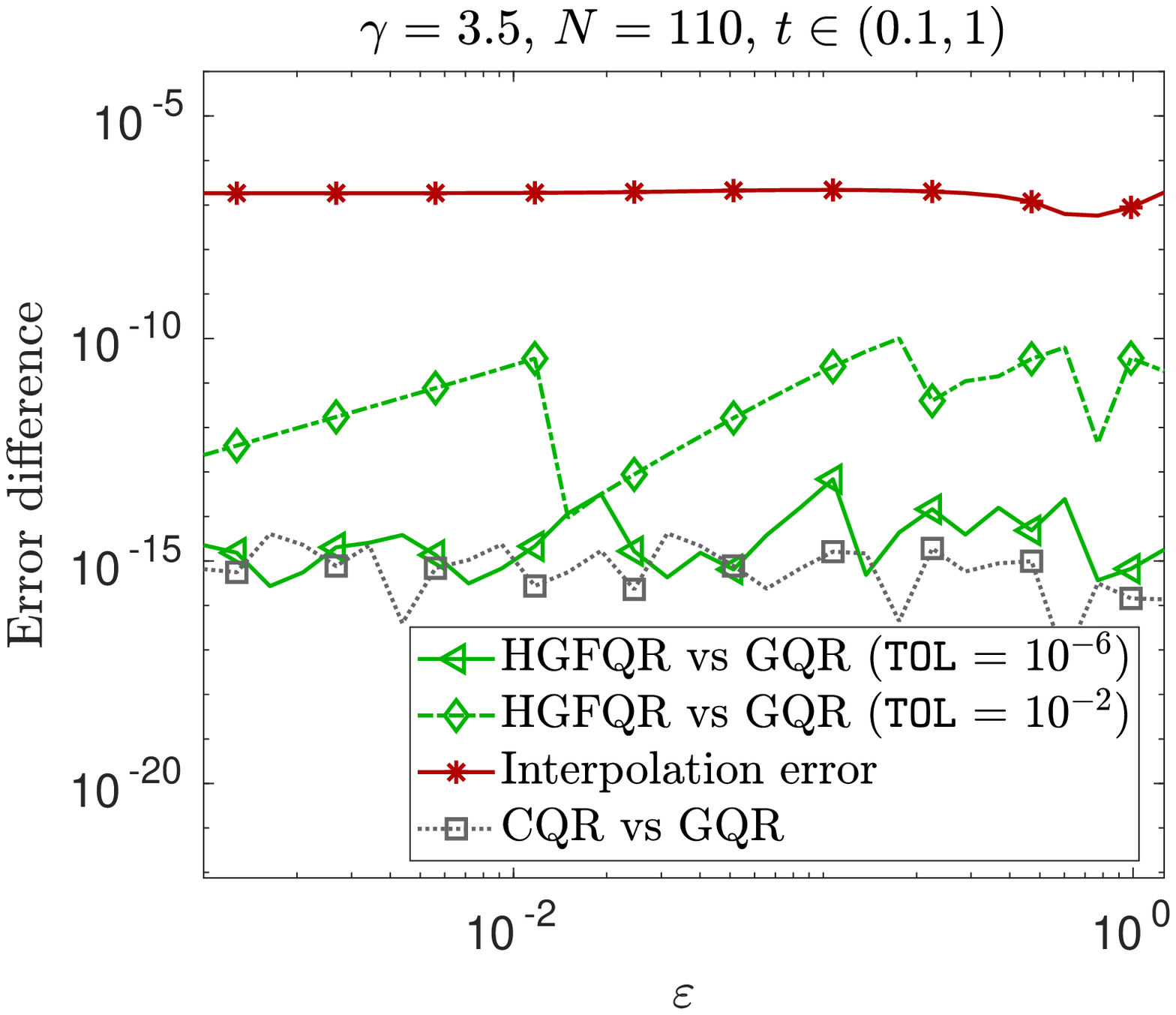}
 \caption{Absolute difference to reference errors.}
\end{subfigure} 
\caption{Optimized truncation value and error differences for the two-dimensional isotropic test case. 
For the coarser \texttt{TOL}$=10^{-2}$ we get fewer basis functions, with truncation error still much below the interpolation error.}
\label{fig:jmaxVals}
\end{figure}
To test the performance of the HermiteGF expansion for anisotropic basis functions, we consider the function:
\begin{equation*}
 f_{\mathrm{a}}(x,y) = \frac{1}{x^2 + xy + y^2} + 2, \quad x,y \in [-1,1].
\end{equation*}
As collocation points, we use Halton points clustered toward the boundaries. For the evaluation grid we use $53 \times 53$ uniformly distributed points.
We choose a non-diagonal matrix $G$. As for the shape matrix $E$, we check whether the off-diagonal elements influence the quality of the results. We fix $E$ and $G$ to be of the following form:
\[
 G = \gamma \begin{pmatrix}
             1 & 0.3 \\
             0.1 & 1.3
            \end{pmatrix}\ \text{with}\ \gamma = 3.5,\quad
 E = \eps \begin{pmatrix}
      1 & p \\
      p & 1
     \end{pmatrix}\ \text{with}\ p \in [0, 0.8].
     \] 
     We restrict $t$ to the interval $\mathrm{tvec} = \texttt{linspace(0.3, 1, 10)}$ in order to improve the stability of the computations. 
Let us take a look on how much the off-diagonal elements of the matrix $E$ influence the error. For our scan, we take 30 logarithmically distributed values of $\eps \in [10^{-3}, 10^{0.1}]$. One can see from \cref{fig:anisotropic_p_N_eps} that properly chosen off-diagonal elements improve the quality of the interpolation. However, for larger $p$ slight instabilities occur which might be explained since $E$ becomes singular for $p\to1$.
\begin{figure}[t]
\begin{subfigure}[t]{0.42\textwidth}
  \centering
  \includegraphics[scale=0.325]{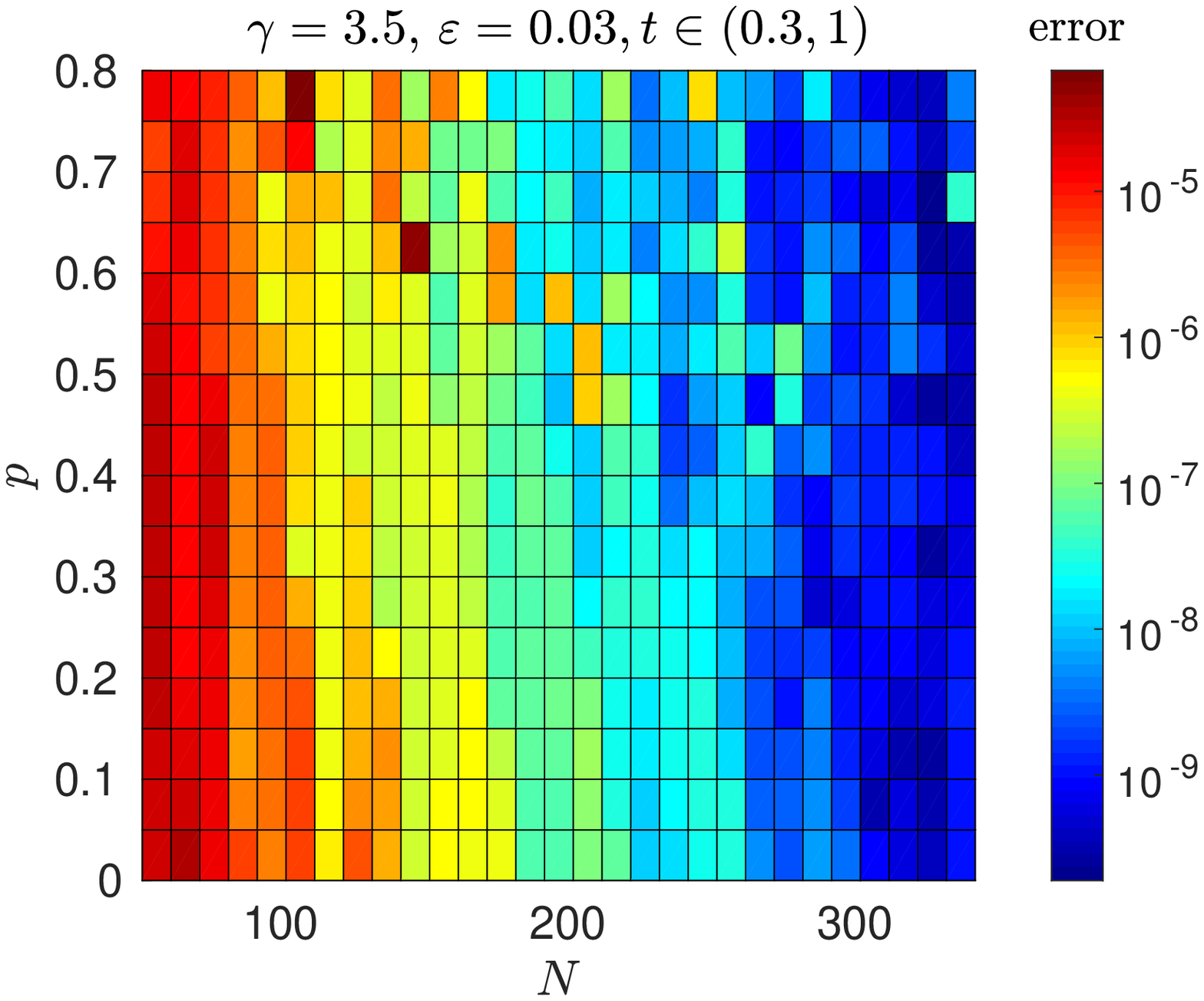}
  \caption{Dependence on $N$.}
 \end{subfigure}
 \begin{subfigure}[t]{0.45\textwidth}
 \centering
  \includegraphics[scale=0.325]{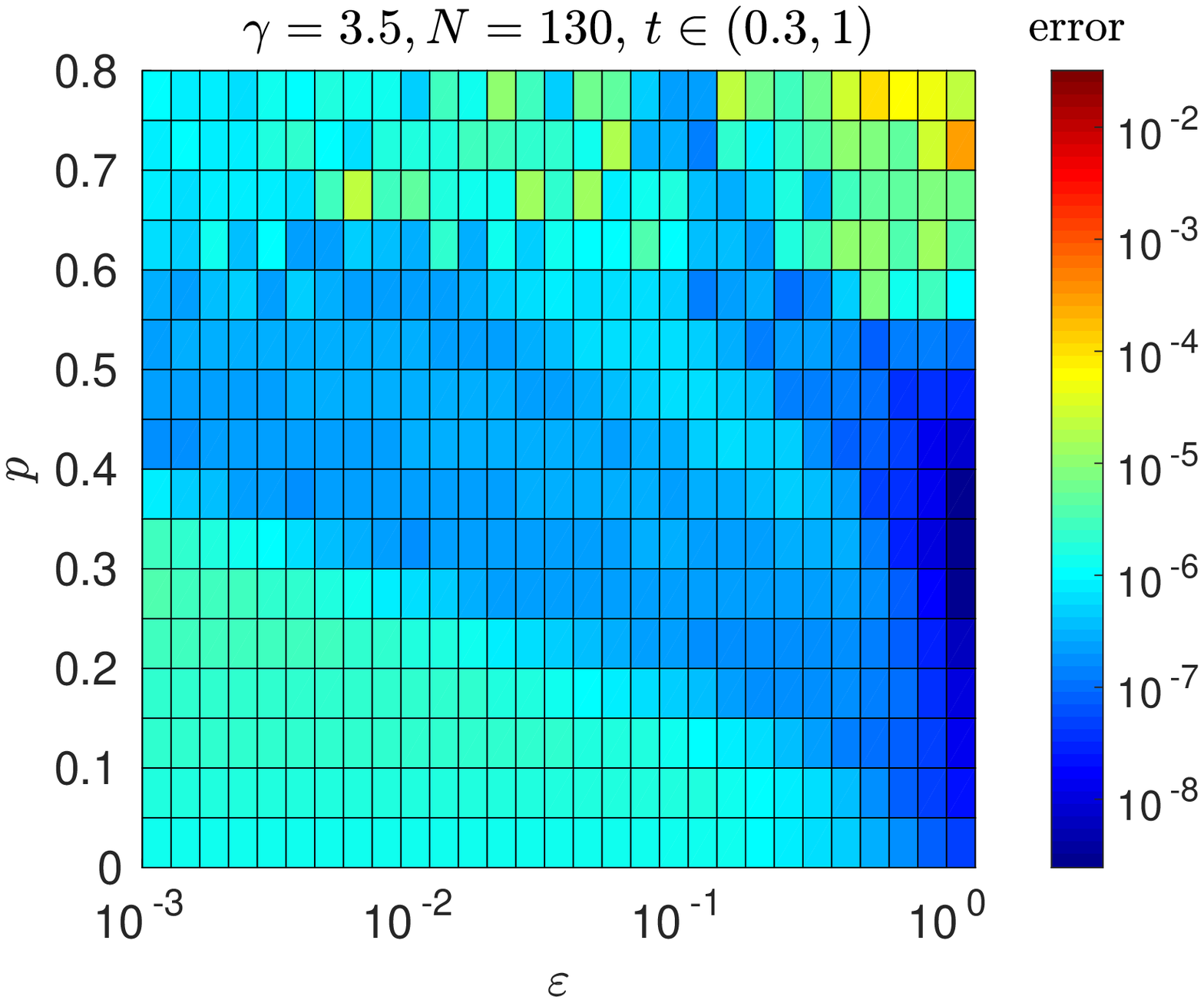}
  \caption{Dependence on $\eps$.}
 \end{subfigure}
\caption{Two-dimensional anisotropic interpolation. For a fixed value of $p$, the error generally decreases with the growth of $N$ as expected. Choosing an anisotropic shape matrix $E$ ($p \neq 0$) often improves the interpolation quality.} 
\label{fig:anisotropic_p_N_eps}
\end{figure}

\subsection{Multivariate interpolation}
In this section, we consider an example of the usage of HermiteGF-QR in higher dimension. For all tests, we use the function
\[
 f(\bx) = \cos(\vert \bx \vert), \quad \bx \in [-1, 1]^d,
\]
where $\vert \bx \vert = \sum_{i=1}^d x_i$. We use Halton collocation points and 1000 Halton points, excluding the ones used for collocation, for the evaluation grid. We fix $G = 5 \mathrm{Id}_d$,  $\eps = $ \texttt{logspace(-3, 0.1, 30)}, 
and we again optimize the parameter $t$ over the set $\mathrm{tvec} = \texttt{linspace(0.3, 1, 10)}$. As before, we choose the tolerance $\texttt{TOL}=10^{-6}$. We first look whether the anisotropic HermiteGF-QR method converges to the results of the direct interpolation as $\eps$ increases. We choose an arbitrary pattern for $E$ in order to verify that the stabilization works for a truly anisotropic interpolation,
\[
 E_{\mathrm{a}} = \eps \begin{pmatrix}
      1 & 0.2 & 0.3 \\
      0.2 & 1 & 0.15 \\
      0.1 & 0.3 & 1
     \end{pmatrix}.
\]
In \cref{fig:anisotropic3d}, we can see that HermiteGF-QR interpolation works stably even for very small values of $\eps$ for different $N$. On the other hand, for larger values of $\eps$ the result matches the direct anisotropic interpolation.

In order to validate the HermiteGF-QR method in higher dimensions against the existing methods, we compare the isotropic HermiteGF-QR method with the Gauss-QR method in 3-5D. For that, we fix 
the shape matrix $E$ and the number of interpolation points $N$ as
\[
E = \mathrm{Id}_d\quad\text{and}\quad N = 4^d,
\]
where $d$ is the dimensionality. We choose the tolerance $\texttt{TOL}=10^{-2}$ since it is enough to meet the overall accuracy of the method. One can see in \cref{fig:isotropic345d} that the HermiteGF-QR method matches the reference Gauss-QR method in 3-5D.

\begin{figure}
\begin{subfigure}[t]{0.42\textwidth}
 \centering
 \includegraphics[scale=0.325]{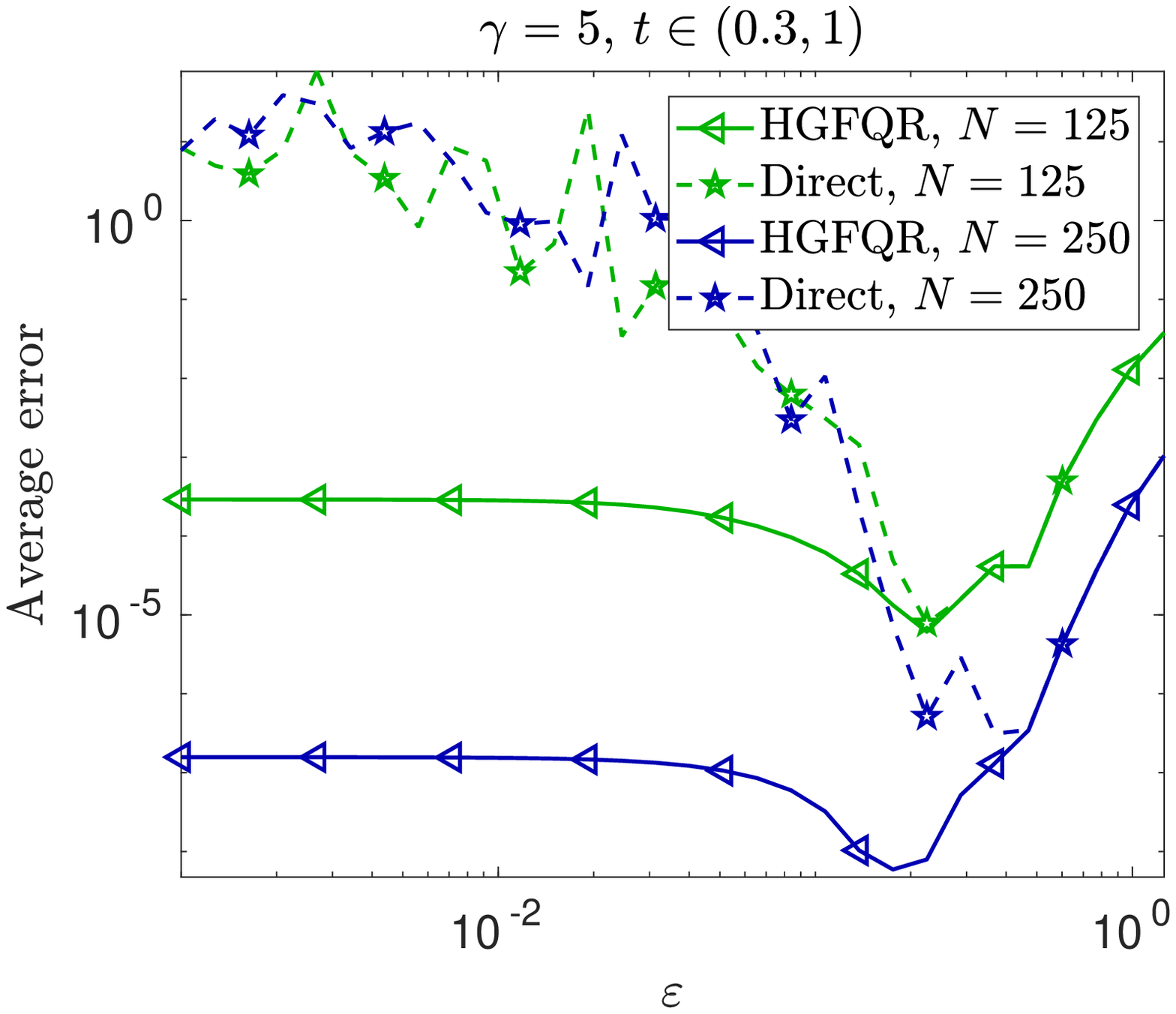}
 \caption{Anisotropic interpolation in 3D.}
 \label{fig:anisotropic3d}
\end{subfigure}
\begin{subfigure}[t]{0.45\textwidth}
 \centering
 \includegraphics[scale=0.325]{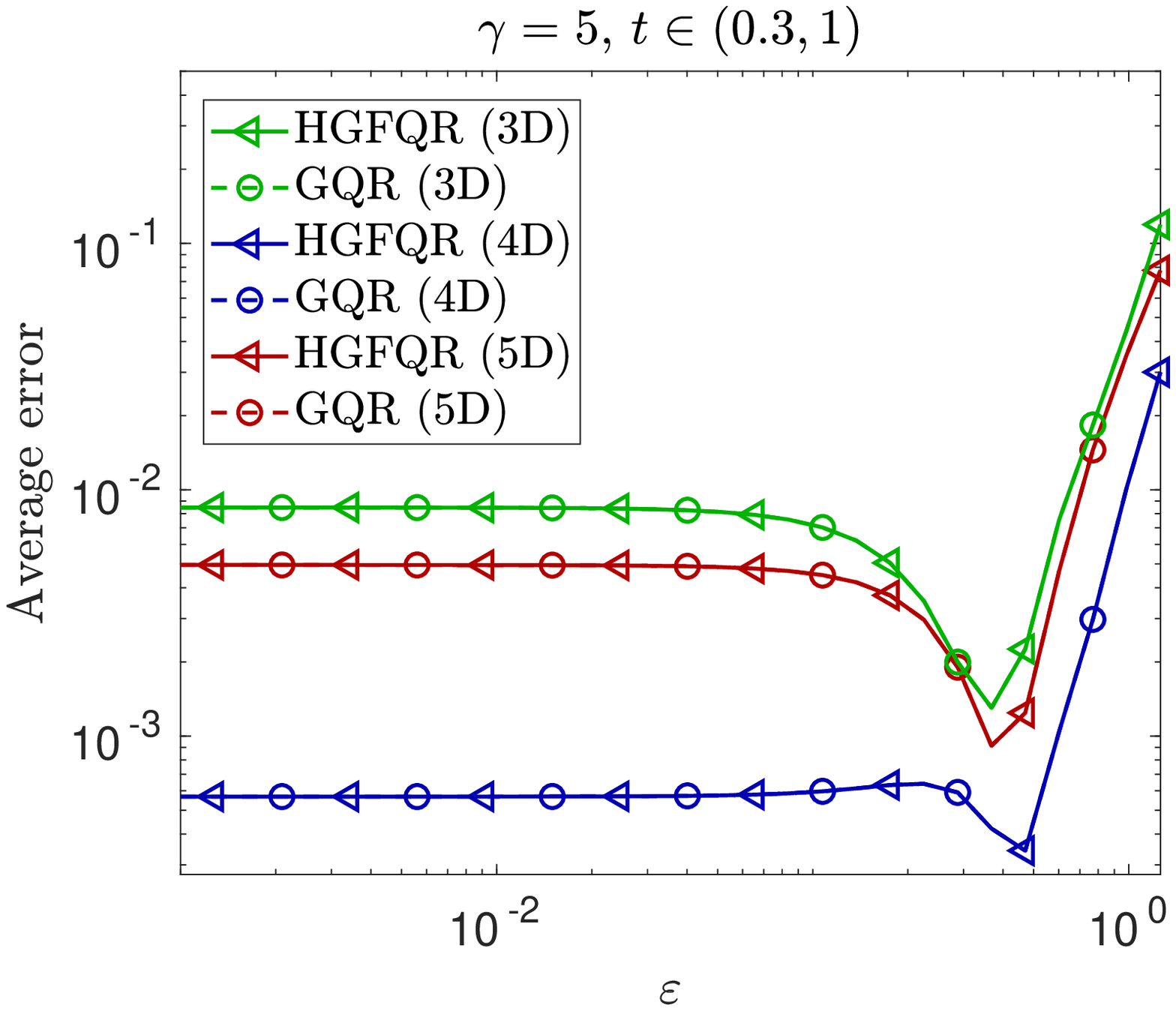}
 \caption{Isotropic interpolation in 3-5D.}
  \label{fig:isotropic345d}
 \end{subfigure}
 \caption{Interpolation in dimensions 3-5D: Both the anisotropic and the isotropic HermiteGF-QR method match the average error of the Gauss-QR method.}
 \label{fig:multidim}
\end{figure}

\section{Conclusion and Outlook}\label{sec:conclusions}
In this paper, we derive a new stabilization algorithm for Gaussian RBF interpolation in the flat limit ($\eps \rightarrow 0$). The main idea of ``isolating'' the ill-conditioning in a special matrix is the same as in the previous approaches \cite{fornberg2007stable,fasshauer2012stable,fornberg2011stable}. 
However, our new algorithmic framework draws from generating functions that naturally extend to
the interpolation with anisotropic Gaussians. 
We introduce several parameters ($\eps$/$E$, $\gamma$/$G$, $t$) for the new HermiteGF basis 
which have a distinct connotation:
$\eps$/$E$ is the original shape parameter of the Gaussian basis, $\gamma$/$G$ stands 
for the size of the evaluation domain of the Hermite polynomials, and the technical basis truncation parameter $t$ can be chosen 
automatically thanks to a novel analytically derived truncation criterion. The interpolation quality is not sensitive to the precise value of $\gamma$/$G$. The generic formulation of the method essentially provides an algorithm in any dimension, and we have reported results for up to five dimensions.

For the cut-off of the HermiteGF expansion, we derive a novel truncation criterion, 
generalizing Mehler's theory for bilinear generating functions. In particular, we analytically estimate the truncation error $\delta \Psi$ in the stable HermiteGF basis $\Psi$. This allows adjusting the number of basis functions based on the desired accuracy in the basis $\Psi$. 

The HermiteGF-QR algorithm has been implemented in \texttt{MATLAB}. For all isotropic test cases, the accuracy of the HermiteGF-QR method is consistent with the ones of 
established stabilization methods (Chebyshev-QR, Gauss-QR). For the anisotropic case, the results matches the RBF-Direct method, where the latter is applicable.

The stability of our algorithm could be further improved by employing special algorithms for the stable inversion of the Vandermonde type matrices. Based on the successful experience of the automatic detection of the truncation parameter $t$, it could be possible to develop an algorithm of choosing an optimal parameter matrix $G$ defining the effective evaluation domain. 
We believe that our anisotropic method could be also of interest in statistical data fitting \cite[$\mathsection$ 17]{fasshauer2015kernel} or for continental size ice sheet simulations, see e.g.\ \cite{cheng2018anisotropic}.
 

\section*{Acknowledgments}
Fruitful discussions with Elisabeth Larsson (Uppsala University) and Michael McCourt (SigOpt) are gratefully acknowledged.

\providecommand{\bysame}{\leavevmode\hbox to3em{\hrulefill}\thinspace}
\providecommand{\MR}{\relax\ifhmode\unskip\space\fi MR }
\providecommand{\MRhref}[2]{%
  \href{http://www.ams.org/mathscinet-getitem?mr=#1}{#2}
}
\providecommand{\href}[2]{#2}

\end{document}